\newcommand{\Q}{{\mathbb Q}}
\newcommand{\Z}{{\mathbb Z}}
\newcommand{\R}{{\mathbb R}}
\newcommand{\F}{{\mathbb F}}
\newcommand{\X}{\mathcal{X}}
\newcommand{\cC}{\mathcal{C}}
\newcommand{\cH}{\mathcal{H}}
\newcommand{\cJ}{\mathcal{J}}
\newcommand{\cV}{\mathcal{V}}
\newcommand{\cU}{\mathcal{U}}
\newcommand{\Cu}{\mathfrak{C}}
\newcommand{\A}{\mathbb{A}}
\newcommand{\Jac}{\mathrm{Jac}}
\newcommand{\disc}{\mathrm{disc}}
\newcommand{\Ht}{H}
\newcommand{\ua}{\underline{a}}
\DeclareMathOperator{\ord}{ord}
\DeclareMathOperator{\Spec}{\mathrm{Spec}}
\DeclareMathOperator{\rank}{\mathrm{rank}}
\DeclareMathOperator{\glue}{\mathrm{glue}}
\newtheorem{definition}{Definition}
\newtheorem{theorem}{Theorem}
\newtheorem{problem}{Problem}
\newtheorem{proposition}{Proposition}[section]
\newtheorem{lemma}[proposition]{Lemma}
\newtheorem{remark}[proposition]{Remark}
\newtheorem{corollary}[proposition]{Corollary}
\title[Logarithmic Density of Positive-Rank Genus-2 Jacobians]{Logarithmic Density of Rank $\geq 1$ and Rank $\geq 2$ Genus-2 Jacobians and Applications to Hyperelliptic Curve Cryptography}
\author[R. Barbulescu]{Razvan Barbulescu}
\address{Institut de mathématiques de Bordeaux (Bordeaux INP, CNRS, Univ. Bordeaux) and Inria Bordeaux}
\email{razvan.barbulescu@u-bordeaux.fr}
\author[M. Barcau \and V. Pa\c sol]{Mugurel Barcau \and
Vicen\c tiu Pa\c sol}
\address{Institute of Mathematics of the Romanian Academy \and CertSIGN, Bucharest}
\email{mugurel.barcau@imar.ro; vicentiu.pasol@imar.ro}
\author[G. C. \c Turca\c s]{George C. \c Turca\c s}
\address{Babe\c s-Bolyai University, Cluj-Napoca \and certSIGN, Bucharest}
\email{george.turcas@ubbcluj.ro}
\thanks{This work was supported by the project “Group schemes, root systems, and
related representations” funded by the European Union - NextGenerationEU through
Romania’s National Recovery and Resilience Plan (PNRR) call no. PNRR-III-C9-2023-
I8, Project CF159/31.07.2023, and coordinated by the Ministry of Research, Innovation
and Digitalization (MCID) of Romania.
The ﬁrst author was supported by Agence Nationale de la Recherche under grant
ANR-22-PNCQ-0002.}
\begin{document}

\begin{abstract} In this work we study quantitative existence results for genus-$2$ curves over $\Q$ whose Jacobians have Mordell--Weil rank at least $1$ or $2$, ordering the curves by the naive height of their integral Weierstrass models. We use geometric techniques to show that asymptotically the Jacobians of almost all integral models with two rational points at infinity have rank $r \geq 1$. Since there are $\asymp X^{\frac{13}{2}}$ such models among the $X^7$ curves $y^2=f(x)$ of height at most $X$, this yields a lower bound of logarithmic density $13/14$ for the subset of such curves whose Jacobians have rank at least $1$. We further present a large explicit subfamily of genus-$2$ curves, ordered by height as above, for which the Jacobians have rank $r \geq 2$, yielding an unconditional logarithmic density of at least $5/7$.  Independently, we give a construction of genus-$2$ curves with split Jacobian and rank at least $2$, producing a subfamily of logarithmic density at least $2/21$. Finally, we analyze quadratic and biquadratic twist families in the split-Jacobian setting, obtaining a positive proportion of rank-$2$ twists. These results have implications for Regev's quantum algorithm in hyperelliptic curve cryptography.
 \end{abstract}

\maketitle 

\section{Introduction}

The Mordell--Weil theorem implies that for any smooth projective curve $C$ of genus $g \geq 1$ defined over $\Q$, the group of rational points on its Jacobian admits a decomposition $\Jac(C)(\Q) \simeq T \times \mathbb Z^{r}$, where $T$ is a finite group and $r \geq 0$ is an integer called the (Mordell--Weil) rank of $\Jac(C)(\Q)$. A fundamental problem in arithmetic geometry is to understand how these ranks are distributed in families of curves, and in particular among hyperelliptic curves of a fixed genus $g$.

 To ask precise distribution questions and give quantitative answers, one must first choose an ordering of the family, a notion of arithmetic \textit{size} $H(C)$ and then count curves $C$ with $H(C) \leq X$ as $X \to \infty $. Common choices include ordering by conductor, discriminant, or by various height functions on coefficients of integral models. Since counting by conductor is  notoriously difficult (see \cite{Brumer1992}), here we  enumerate integral models by the height of the coefficients of their models. 

 \medskip

In particular, for a model of the form $C: y^2=\sum_{i=0}^{2g+2}f_ix^i$ with $f_i \in \mathbb Z$, we define the height by $H_1(C) = \max\{|f_i|\}$ as in~\cite{BhargavaGross2017}.  This gives rise to the following box
 \begin{equation}\label{eq:cC1}
 \cC_1^{(g)}(X)=\left\{C: y^2=\sum_{i=0}^{2g+2}f_ix^i\in \Z[x] \middle| H_1(C) \leq  X \right\} \end{equation}
of cardinality  $(\gamma_1+o(1))  X^{2g+3}$, for an explicit constant $\gamma_1>0$. This is the ordering used by Booker et al. in \cite{Booker2016}.

\medskip

Another height function, considered in \cite{BhargavaGross2013} for models of the form $C: y^2=x^{2g+1}+\sum_{k=2}^{2g+1}c_kx^{2g+1-k}$ with $c_i \in \mathbb Z$, is given by $H_2(C) = \max\{|c_k|^{\frac{2g(2g+1)}{k}}\}$, which gives rise to the box
\begin{equation}\label{eq:cC2}
\cC_2^{(g)}(X)=\left\{C : y^2=x^{2g+1}+\sum_{k=2}^{2g+1}c_kx^{2g+1-k}\in \Z[x] \middle|  H_2(C) \leq X\right\} . \end{equation}

By \cite{BhargavaGross2013}*{Theorem 44} the cardinality of $\cC_2^{(g)}(X)$ is $(\gamma_2+o(1)) X^{\frac{2g+3}{4g+2}}$ for an explicit constant~$\gamma_2>0$. 

Let $\cC^{(g)}(X)$ denote one of the families of genus-$g$ curves defined above (i.e., $\cC_1^{(g)}(X)$ or $\cC_2^{(g)}(X)$).
If $S$ is any set of hyperelliptic curves, we define $S(X):=S\bigcap \cC^{(g)}(X)$. We say that $S$ has  \emph{proportion}  $\delta\in [0,1]$  (with respect to the family $\cC^{(g)}(X)$) if $\lim_{X\rightarrow\infty} \frac{|S(X)|}{|\cC^{(g)}(X)|}=\delta,$ whenever the limit exists. In many arithmetic situations of interest, such as hyperelliptic curves whose Jacobian has large Mordell--Weil rank, one expects this proportion to be zero. To capture the asymptotic size of such thin subsets, we introduce a logarithmic notion of density.

\begin{definition}\label{def:density}
A set $S$ of hyperelliptic curves of genus $g$ has logarithmic density $\alpha$ with respect to $\cC^{(g)}(X)$ if  
$$\liminf_{X\rightarrow\infty}\frac{\log |S(X)|}{\log|\cC^{(g)}(X)|}=\alpha. $$
\end{definition}

In what follows, all notions of proportion and density are taken with respect to the family $\cC_1^{(g)}(X)$, unless otherwise specified.

The main results of this article, some summarized in the next theorem, focus on the genus-$2$ case. Accordingly,
when referring specifically to genus-$2$ curves, we omit the superscript
$(g)$ and write $\cC(X)$ in place of $\cC^{(g)}(X)$.

\medskip

\begin{theorem}[see Corollaries \ref{cor:3}, \ref{cor:rank2-logdensity} and Prop.~\ref{prop:split}]\label{th:1}~

\begin{enumerate}
\item The subset of genus-$2$ curves with split Jacobian and rank $r\geq 2$ has logarithmic density at least $\frac{2}{21}$ with respect to $\cC_1(X)$.
\item The subset of genus-$2$ curves with rank $r\geq 1$ has logarithmic density at least $\frac{13}{14}$ with respect to $\cC_1(X)$.
\item The subset of genus-$2$ curves with rank $r\geq 2$ has logarithmic density at least $\frac{5}{7}$ with respect to $\cC_1(X)$.
\end{enumerate}
\end{theorem}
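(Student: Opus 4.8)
The three parts share a common template, so the plan is to prove each by (i) isolating an explicit subfamily $\mathcal{F}\subseteq\cC_1(X)$ on which one can write down one or two canonical rational points of the Jacobian; (ii) showing that these points are non-torsion (and, for the rank-$\geq 2$ assertions, $\Z$-linearly independent) on a density-one sub-subfamily; and (iii) computing the weighted box-count of $\mathcal{F}$ and reading off the exponent. Throughout, the reduction in step (ii) is to the statement that the \emph{exceptional locus}---where the canonical point is torsion, or where the two canonical points are dependent---is a thin set in the sense of Serre, so that its contribution is $o(|\mathcal{F}(X)|)$ and the logarithmic density of $\mathcal{F}$ is inherited by the rank-$\geq k$ set.

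For part (2), I would take $\mathcal{F}$ to be the family of sextic models with $f_6=a^2$ a nonzero perfect square; these are exactly the genus-$2$ models carrying two rational points $\infty_\pm$ at infinity, and the degree-zero class $\sigma=[\infty_+-\infty_-]$ is a canonical section of the universal Jacobian over $\mathcal{F}$. To force $\rank\Jac(C)(\Q)\geq 1$ generically it suffices to prove that $\sigma$ is non-torsion at the generic fiber, for which it is enough to exhibit a single model on which $\sigma$ has infinite order (checked, for instance, by a reduction-modulo-primes computation of its order at two good primes). Néron's specialization theorem then forces the torsion-specialization locus to be thin. The count is immediate: $a$ ranges over $\asymp X^{1/2}$ values and $f_0,\dots,f_5$ over $\asymp X$ values each, so $|\mathcal{F}(X)|\asymp X^{13/2}=|\cC_1(X)|^{13/14}$, giving logarithmic density at least $13/14$.

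For part (3) I would thicken this construction by imposing a second piece of rational structure on $f$ that furnishes an independent section---for instance prescribing, in addition to $f_6=a^2$, further rational data (a second prescribed square value, or a rational point at a controlled place) producing a class $\tau$ not proportional to $\sigma$---calibrated so that the surviving parameters give a box-count of $\asymp X^{5}=|\cC_1(X)|^{5/7}$. Independence is again a generic, open condition: it is enough to verify on one fiber that the $2\times 2$ N\'eron--Tate height pairing matrix of $(\sigma,\tau)$ is nonsingular, after which the dependent locus is thin and negligible, yielding logarithmic density $\geq 5/7$. Part (1) is handled by a genuinely different, split construction: starting from two elliptic curves $E_1,E_2$, each equipped with a prescribed rational point of infinite order via a Weierstrass parametrization, I would glue them into a genus-$2$ curve $C$ with $\Jac(C)\sim E_1\times E_2$; since isogenous abelian varieties have equal rank, $\rank\Jac(C)(\Q)=\rank E_1(\Q)+\rank E_2(\Q)\geq 2$ as soon as both sections are non-torsion. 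Expressing $H_1(C)$ in the gluing parameters and optimizing the box gives $\asymp X^{2/3}=|\cC_1(X)|^{2/21}$, hence logarithmic density at least $2/21$.

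The main obstacle in all three parts is step (ii) together with the passage from ``Zariski-closed/thin exceptional locus'' to ``$o$ of the box.'' The soft specialization theorems of N\'eron and Silverman only guarantee that the bad parameters are not Zariski-dense, whereas the logarithmic-density statement needs a genuine counting bound, and the height $H_1$ weights the leading coefficient $a$ differently from $f_0,\dots,f_5$; so one must bound the number of integral points of bounded \emph{weighted} height on a proper closed (or thin) subvariety and check it is of strictly smaller order than $X^{13/2}$, $X^5$, and $X^{2/3}$ respectively. For part (3) there is an additional geometric subtlety: one must rule out that the two canonical classes $\sigma,\tau$ are \emph{forced} to be dependent by the imposed structure (which would collapse the rank back to $1$), and this is precisely what the single-fiber regulator computation is designed to exclude.
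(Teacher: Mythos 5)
Your proposal reproduces the paper's geometric skeleton for parts (2) and (3), but it has two genuine gaps. The first is in part (1): the gluing construction cannot produce the exponent $2/3$. The Howe--Lepr\'evost--Poonen glue map is homogeneous of degree $21$ in the scaling of the two cubics (this is exactly Lemma~\ref{gluelemma}: $\glue(f^{(d)},g^{(d)})=d^{21}\glue(f,g)$), so demanding that the sextic $\glue(f,g)$ have coefficients of size $\le X$ forces the roots/coefficients of the two cubics into a box of size about $X^{1/21}$; since a pair of split cubics is determined by its six roots, this yields at most $X^{6/21}=X^{2/7}$ models -- before one even imposes the existence of a rational non-torsion point on each factor, which costs more and which you cannot guarantee unconditionally for a positive proportion of such pairs. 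The paper (Proposition~\ref{prop:split}, following \cite{Gajovic2025}) instead uses the sparse bielliptic family $C_{d,m}\colon y^2=d^3x^6+m^3$ with $d,m$ primes $\equiv 3 \pmod 4$: the splitting $\Jac(C_{d,m})\sim E_d\times E_m$ comes from two explicit degree-$2$ quotient maps, the coefficients are just $d^3,m^3\le X$, and -- crucially -- no marked points or specialization are needed because Frey's theorem pins down $\rank E_p(\Q)=1$ unconditionally for $p\equiv 3\pmod 4$. This gives $\gg X^{2/3}/(\log X)^2$ curves and hence $2/21$; your route caps out well below that.

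The second gap is the quantitative step in part (2), which you correctly flag as the main obstacle but leave open, and the obvious fix fails: the exceptional locus produced by N\'eron specialization is only \emph{thin} (it may involve images of degree-$\ge 2$ covers, not just hypersurfaces), and Serre's quantitative bound applied to the ambient box $[-X,X]^7$ gives $\ll X^{13/2}(\log X)^{\gamma}$, which is \emph{not} $o(X^{13/2})$, i.e.\ not smaller than the family $\{f_6=a^2\}$ you are counting inside; a lopsided-box version of the thin-set bound would be needed and is not off the shelf. The paper circumvents thin sets entirely here (Proposition~\ref{prop:torsionlocus}): using Faltings-height estimates (Proposition~\ref{prop:faltings-naive}, via \cite{Kieffer2022} and \cite{Pazuki2012}) together with the Gaudron--R\'emond torsion bound \cite{GaudronRemond2025}, any torsion specialization of $[\infty_+-\infty_-]$ in the box has order $\ll(\log X)^2$, so only the loci $\cV_n$ with $n\ll(\log X)^2$ can occur; each $\cV_n$ is contained in an explicit hypersurface of degree $\ll n^2$ (via $[n]^{*}\cL\simeq\cL^{\otimes n^2}$), and for honest hypersurfaces the lopsided-box count is elementary, giving $\ll X^6(\log X)^6=o(X^{13/2})$. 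In other words, the paper upgrades ``thin'' to ``finite union of explicit hypersurfaces of controlled degree,'' which is precisely what makes the weighted count go through. Your part (3) is essentially the paper's argument (a second section from a forced rational point, independence checked on one explicit fiber, then N\'eron plus Serre), but note the paper \emph{fixes} $a_6=a_0=1$ rather than letting two square values vary: this keeps the parameters in a standard box in $\A^5$, where Serre's thin-set bound $\ll X^{9/2}(\log X)^{\gamma}$ applies verbatim (Corollary~\ref{cor:rank2-density-in-box}); if you let $f_6=a^2$ vary, the same lopsided-box problem as in part (2) reappears.
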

For comparison, in the case of elliptic curves the random matrix model predicts
that the set of curves of rank \(r\geq 2\) has logarithmic density
\(\frac{19}{24}\) with respect to \(\cC_2^{(1)}(X)\) (see
Theorem 7.3.3 in~\cite{Park2019}). In genus \(2\), we prove unconditional lower
bounds \(\frac{13}{14}\) for rank \(r \geq 1\) and \(5/7\) for rank \(r \geq 2\)
with respect to \(\mathcal{C}_1(X)\). If rank parities are equidistributed in the
subfamily with two points at infinity, then the same construction would suggest
logarithmic density at least \(\frac{13}{14}>\frac{19}{24}\) for rank \(r \geq 2\)
as well. This phenomenon is also consistent with existing numerical data: for
\(X \leq 2^{18}\), the percentage of curves of rank at least \(2\) is larger
among the Jacobians of genus-\(2\) curves than among elliptic curves
(see~\cite{OnlineComplement}).

Our work toward Theorem~\ref{th:1} is partially motivated by
\cite{Booker2016}*{Remark A.1}, where the authors observe that, for genus-\(2\)
curves of small discriminant with small coefficients, the divisor class
\([\infty_+-\infty_-]\) is rational and typically has infinite order. Indeed,
small coefficients make it more likely that the leading coefficient of
\(g(x)=4f(x)+h(x)^2\) is a square, which implies rationality of the two points at
infinity in the sextic case. We turn this heuristic into a conceptual and
quantitative statement by viewing \([\infty_+-\infty_-]\) as a universal section
of the universal Jacobian, proving that this section is not torsion, and showing
that among the integral models for which the two points at infinity are rational,
the proportion for which \([\infty_+-\infty_-]\) is torsion tends to \(0\). We
also produce in Corollary~\ref{cor:rank2-logdensity} an unconditional source of
Jacobian rank at least \(2\) by constructing an explicit two-section subfamily
and applying N\'eron specialization. This offers a partial explanation for the
abundance of rank-\(2\) examples in small-coefficient or small-discriminant
databases.

\begin{remark}[Brute-force search complexity] \label{rmk:brute-force} A naive brute-force search that draws curves uniformly at random from $\cC_{1}(X)$ would succeed in finding a curve of rank $r \geq 1$ in time $|\cC_1(X)|^{1-\alpha + o(1)}$, where $\alpha$ is the logarithmic density of such curves in $\cC_1(X)$. A consequence of the second part of this theorem is that the complexity of this algorithm is at most $O(X^{\frac{1}{2}+o(1)})$. Similarly, if such an algorithm is used to find curves of rank $r \geq 2$ by sampling uniformly at random from $\cC_1(X)$, the third part of the theorem implies that the algorithm succeeds after at most $O(X^{2+o(1)})$ trials.
\end{remark}

\medskip

\paragraph{\emph{The twists of a given curve.}} Let $C : y^2 = f(x)$ be a fixed hyperelliptic curve defined over $\Q$. For each
squarefree integer $d$, we denote by $C^{(d)}$ the quadratic twist of $C$ given by $dy^2=f(x)$. We now study rank distributions within the family of quadratic twists of this
fixed hyperelliptic curve. We
consider the set
\begin{equation}\label{eq:twist-DX}
D(X)=\{d \in \mathbb Z \mid 1\leq d<X\text{ and } d \text{ is squarefree}\}.
\end{equation}

In complete analogy with a definition above, for any set $S$ of squarefree integers, we set
$S(X) := S \cap D(X)$. We say that $S$ has \emph{proportion} $\delta \in \R_{\ge 0} \cup \{\infty\}$ if
the limit $\lim\limits_{X \to \infty} \frac{|S(X)|}{|D(X)|} = \delta$
exists. To measure the asymptotic size of subsets that have proportion $0$ within
a fixed twist family, we introduce the following logarithmic notion of density.

\begin{definition}
Let $C$ be a hyperelliptic curve and $r \ge 0$ be an integer. The logarithmic density of the set of twists $C^{(d)}$ with $\rank(\Jac(C^{(d)})) \ge r$ is defined by

\[
\alpha(C, r):=\liminf_{X \to \infty}
\frac{
\log \bigl| \{ d \in D(X) \mid \rank(\Jac(C^{(d)})(\Q)) \ge r \} \bigr|
}{
\log |D(X)|
}.
\]
 
\end{definition}

For example, in~\cite{Smith2025} it is proven under BSD that, for all elliptic curves $E$, the proportion of rank $r\geq 2$ twists of $E$ is~$0$. The proportion of odd rank twists exists for all hyperelliptic curves (see~\cite{Yu2019}), but it can be zero. It has been conjectured in~\cite{Watkins2008} that it has a logarithmic density $\alpha=\frac{3}{4}$ whereas the best proven result is that of \cite{Gouvea1991}, where the authors show that $\alpha\geq \frac{1}{2}$. \medskip

We prove a result on the twists of a genus-$2$ curve with split Jacobian.

\begin{theorem}[simplified statement of Prop.~\ref{split twist} and Prop.~\ref{square twist}]\label{th:2}~\\

\begin{enumerate}
\item Let \(E_1,E_2/\mathbb Q\) be elliptic curves with full rational
\(2\)-torsion, together with an ordering of their nonzero \(2\)-torsion points.
Assume that isomorphism \(E_1[2]\simeq E_2[2]\) is not induced by
a \(\overline{\mathbb Q}\)-isomorphism \(E_1\simeq E_2\).
For squarefree integers \(d_1,d_2\), let \(C^{(d_1,d_2)}\) be the genus-\(2\)
curve obtained by gluing the quadratic twists \(E_1^{(d_1)}\) and
\(E_2^{(d_2)}\) with the induced ordering of their \(2\)-torsion. 
If the Birch--Swinnerton-Dyer conjecture holds for the quadratic twists of
\(E_1\) and \(E_2\), then
\[
\lim_{X\to\infty}
\frac{
\left|\{(d_1,d_2)\in D(X)^2:
\operatorname{rank}\Jac(C^{(d_1,d_2)})(\mathbb Q)\ge 2\}\right|
}{
|D(X)|^2
}
=
\frac14 .
\]

\item Let \(E/\mathbb Q\) be a semistable elliptic curve admitting a
\(\mathbb Q\)-rational \(3\)-isogeny, and let \(C/\mathbb Q\) be a genus-\(2\)
curve such that \(\Jac(C)\sim_{\mathbb Q}E^2\). Then, for every nonzero
squarefree integer \(d\),
\[
\Jac(C^{(d)})\sim_{\mathbb Q}\bigl(E^{(d)}\bigr)^2,
\]
and
\[
\liminf_{X\to\infty}
\frac{
\#\{d\in D(X): \rank_{\mathbb Q}\Jac(C^{(d)})\ge 2\}
}{
|D(X)|
}
>0.
\]
\end{enumerate}
\end{theorem}

The theoretical arguments are accompanied by explicit computational experiments;
the scripts illustrating the results are publicly available in the GitHub
repository \cite{OnlineComplement}.

\subsection{Cryptographic consequences}\label{ssec:crypto}

Hyperelliptic curve cryptography (HECC) is the genus-$2$ analogue of elliptic
curve cryptography (ECC): one works in the Jacobian group
$\Jac(C)(\F_q)$ of a genus-$2$ curve $C/\F_q$. In the classical proposal of
K\"oblitz~\cite{Koblitz1989}, the curve is sampled from a large family of models
$C:y^2+h(x)y=f(x)$ over $\F_q$, so that security is tied to the average-case
difficulty of the discrete logarithm problem in these Jacobian groups. For
simplicity in this cryptographic discussion, we take \(q\) to be prime.

Shor's algorithm~\cite{Shor1994} already gives a polynomial-time quantum
algorithm for discrete logarithms. The relevance of Regev's work is different:
Regev's factoring algorithm~\cite{Regev2023}, and its extensions to the discrete
logarithm problem by Eker{\aa} and G\"artner~\cite{Ekera2024}, provide a new
quantum approach whose cost per run, i.e. the size of the quantum circuit, depends on an additional parameter. This approach was
adapted to elliptic curves in~\cite{BarbulescuBarcauPasol2025} and to genus-$2$
Jacobians in~\cite{BarbulescuBisson2024}.

In the curve setting, the Regev parameter can be supplied by rational
Mordell--Weil generators on a lift or twist of the curve. Section~\ref{ssec:Regev}
makes this precise: subject to the usual independence heuristic for the reduced
Mordell--Weil points, a rank-\(r\) lift or compatible twist allows one to take
the Regev parameter to be \(d=r+2\). This is the sense in which high
Mordell--Weil rank is useful for the Regev-style cryptanalytic preprocessing
step. Throughout the paper, when we say that a curve is suitable for Regev's
algorithm, we mean suitable as an input to this attack or experimental
preprocessing procedure, not suitable as a recommendation for secure HECC curve
selection.

This leads first to the corresponding search problem for rational curves.
\begin{problem}[THE HIGH RANK CURVE PROBLEM]\label{prob:draw}
Fix an integer \(g\ge 1\), a target rank \(r\ge 0\), and a height-ordered family
\(\mathcal F^{(g)}\) of genus-\(g\) hyperelliptic curves over \(\mathbb Q\), for
instance \(\mathcal C^{(g)}_1\), \(\mathcal C^{(g)}_2\), or a prescribed
height-ordered subfamily such as the split-Jacobian locus. For \(X\ge 1\), write
\(\mathcal F^{(g)}(X)\) for the curves in this family of height at most \(X\).
Given \(X\), sample a curve from the finite set
\[
\left\{
C\in \mathcal F^{(g)}(X) :
\operatorname{rank}\Jac(C)(\mathbb Q)\ge r
\right\},
\]
if this set is nonempty.
\end{problem}

Theorem~\ref{th:1} gives unconditional logarithmic-density lower bounds for this
search in our height ordering, and Remark~\ref{rmk:brute-force} translates these
bounds into upper bounds on the number of random trials needed to find curves of
rank at least \(1\) or at least \(2\). The explicit high-rank curves collected in
Appendix~\ref{appendix} serve a different purpose: they provide concrete inputs
for experiments with the Regev-style algorithm, rather than evidence for the
density theorems.

For cryptographic applications, however, the curve over \(\F_q\) is often fixed
in advance. In that case one cannot freely sample a new rational curve; one
instead searches for a high-rank lift, twist, or related curve whose reduction is
isomorphic to the original curve, or whose reduced Jacobian is isogenous to the
original Jacobian with suitable kernel. This leads to the second search problem.
\begin{problem}[THE HIGH RANK TWIST PROBLEM]\label{prob:twist}
Let \(C\) be a rational genus-\(2\) curve and let \(r\) be a fixed nonnegative
integer.

\begin{enumerate}
\item If \(\Jac(C)\) is simple, find, if it exists, a squarefree integer
\(d\in[1,X]\) such that
\[
\operatorname{rank}\Jac(C^{(d)})(\mathbb Q)=r.
\]
\item If \(\Jac(C)\sim_{\mathbb Q}E^2\) for an elliptic curve \(E/\mathbb Q\),
find, if it exists, a squarefree integer \(d\in[1,X]\) such that
\[
2\operatorname{rank}\bigl((E^{(d)})(\mathbb Q)\bigr)=r,
\]
equivalently, under the induced isogeny,
\[
\operatorname{rank}\Jac(C^{(d)})(\mathbb Q)=r.
\]
\item Let \(E_1/\mathbb Q\) and \(E_2/\mathbb Q\) be elliptic curves satisfying
the same conditions as in part (1) of Theorem~\ref{th:2} above. Find, if they exist, squarefree
integers \((d_1,d_2)\in[1,X]^2\) such that \(C^{(d_1,d_2)}\) satisfies
\[
\operatorname{rank}\Jac(C^{(d_1,d_2)})(\mathbb Q)
=
\operatorname{rank}E_1^{(d_1)}(\mathbb Q)
+
\operatorname{rank}E_2^{(d_2)}(\mathbb Q)
=r,
\]
where \(C^{(d_1,d_2)}\) is constructed as above.
\end{enumerate}
\end{problem}

The Poincar\'e complete reducibility theorem motivates the cases separated in
Problem~\ref{prob:twist}: after extending the base field, the Jacobian of a
genus-\(2\) curve is either simple, isogenous to a product \(E_1\times E_2\) of
two non-isogenous elliptic curves, or isogenous to \(E^2\) for an elliptic curve
\(E\). The split cases are especially amenable to analysis because the genus-\(2\)
rank problem reduces to rank questions for elliptic curves; the compatibility of
twisting with these split constructions is made precise in Lemma~\ref{gluelemma}
and Propositions~\ref{square twist} and~\ref{split twist}. Theorem~\ref{th:2},
under its stated hypotheses, asserts that the relevant split-Jacobian twist
families contain a positive proportion of twists of Jacobian rank at least \(2\).

Several structured subclasses of genus-$2$ curves used in cryptography, including
split-Jacobian and CM families, have been studied extensively
\cites{Buhler1998,Brown2005,Drylo2012,Kawazoe2008}. We are not aware of a
discrete-logarithm algorithm prior to this work which is faster for these
subclasses than for general genus-$2$ Jacobians. The distinction in the present
paper is that these structures can make the high-rank preprocessing used by the
Regev-style algorithm easier. For CM curves of the form $y^2=x^5+a$, one can
exploit dedicated arithmetic methods such as those in~\cite{Stoll2002} and
\cite{Molin2022}; for split Jacobians, the constructions of
Section~\ref{ssec:twist of split curve} reduce the search to quadratic twists of
elliptic curves. The computations in Section~\ref{ssec:twist of split} illustrate
these two phenomena on curves appearing in the cryptographic literature.

\subsection*{Acknowledgments} We would like to thank Bill Allombert and Aurel Page for discussions about the implementation of \(L\)-functions of genus-\(2\) curves and Hecke characters in PARI/GP. We also thank Alex Bartel for suggesting, in a personal discussion, that the rank distribution of twists of certain curves used in cryptography may be explained by the decomposition of their Jacobians. Finally, we are very grateful to the anonymous referees for their careful reading of the manuscript and for their detailed, thoughtful, and constructive reports, which led us to correct several mathematical points, clarify the exposition, and improve the organization of the paper.

\section{Main results}

\subsection{Number of curves whose Jacobian has positive rank}

\label{ssec:torsion}

Following \cite{Booker2016}*{Section 2.1}, every genus-$2$ curve defined over an integral domain of characteristic
$\neq 2$ admits a Weierstrass model
\begin{equation}\label{eq:booker-weierstrass}
  C:\quad y^2+h(x)y=f(x),
\end{equation}
with $\deg f\le 6$ and $\deg h\le 3$.

Completing the square gives the simplified equation
\begin{equation}\label{eq:booker-square}
  (2y+h(x))^2 \;=\; 4f(x)+h(x)^2 \;=:\; g(x).
\end{equation}
The discriminant of \eqref{eq:booker-weierstrass} is defined by
\[
  \Delta(f,h):=2^{-12}\disc\bigl(4f+h^2\bigr),
\]
and $\Delta(f,h)\neq 0$ is equivalent to \eqref{eq:booker-weierstrass} defining a smooth
genus-$2$ curve.

In the search for genus-$2$ curves of bounded discriminant, Booker et al. \cite{Booker2016}*{Section 3.1} enumerate integral
models of the form \eqref{eq:booker-weierstrass}. Although these models are more general than the ones in the boxes $\cC_1(X)$ and $\cC_2(X)$ (see \eqref{eq:cC1} and \eqref{eq:cC2}), one may reduce
\(h\) modulo \(2\) without changing the isomorphism class of the curve. Indeed,
for any \(q(x)\in \mathbb Z[x]\) with \(\deg q\le 3\), the change of variables
\(Y=y+q(x)\) transforms $y^2+h(x)y=f(x)$
into the isomorphic integral model
\[
   Y^2+(h(x)-2q(x))Y=f(x)+q(x)h(x)-q(x)^2.
\]
Moreover,
$
   4\bigl(f+qh-q^2\bigr)+(h-2q)^2=4f+h^2$,
so the completed-square polynomial, and hence the discriminant, is unchanged.
Choosing \(q\) coefficientwise therefore allows us to take every coefficient of
\(h\) to be \(0\) or \(1\), which explains the condition \(h_i\in\{0,1\}\) in the
boxes below.

For parameters $X,Y\ge 1$  the authors of \cite{Booker2016} define the boxes
\[\left\{
\begin{array}{l}
  S_1(X):=\{(f,h): |a_i|\le X,\ h_i\in\{0,1\}\}\\ 
  S_2(X,Y):=\{(f,h): |a_i|\le X\,Y^{6-i},\ h_i\in\{0,1\}\}.
\end{array}\right.,
\]
where $f=\sum_{i=0}^6a_ix^i$ and $h=\sum_{i=0}^{\deg h}h_ix^i$.
Thus \(S_1(X)\) decomposes as the disjoint union of the \(16\) fibers obtained by
fixing \(h\in\{0,1\}^4\); each fiber has the same coefficient bounds for \(f\) as
\(\mathcal C_1(X)\), but parametrizes the models
\[
   y^2+h(x)y=f(x)
\]
with that fixed choice of \(h\).

This motivates the definition of the following parameter space $\cU^{\infty}$.

\paragraph{\emph{The parameter space $\cU^{\infty}$.}}
Let $\cH$ be the reduced finite $\Q$-scheme with $\cH(\Q)=\{0,1\}^4$ parametrizing the $16$ possible
choices of $h$ with coefficients in $\{0,1\}$.  Let
\[
  \X \;:=\; \mathbb A^7_\Q \times \cH
\]
be the parameter space of pairs $(f,h)$ with $\deg f\le 6$, $\deg h\le 3$, and $h_i\in\{0,1\}$.
We use coordinates $\underline{a}=(a_6,\dots,a_0)$ on the $\A^7$-factor, so that
\[
  f_{\ua}(x)=a_6x^6+a_5x^5+\cdots+a_0 \text{ and }
  h(x)=h_3x^3+h_2x^2+h_1x+h_0
\]
with $h_i \in \{0,1 \}$.
Since $\X$ is a disjoint union of $16$ copies of $\A^7_\Q$, all arguments below should be read after fixing one $h$; in particular, for any irreducible component $\X_h\simeq \A^7_\Q$ and we will simply write $\X$. 

Consider the universal genus-$2$ curve $\Cu \to \mathcal X$,
defined as the projective closure of the affine equation
\begin{equation} \label{affine:model} 
y^2 + h(x)y = f_{\ua}(x) 
\end{equation}
inside the weighted projective space $\mathbb P(1,3,1)$.
 Let $\mathcal U \subset \mathcal X$ denote the open subset where the discriminant of this equation is non-vanishing. Over \(\mathcal U\), the morphism
\(\mathcal C \to \mathcal U\) is smooth and proper of relative dimension \(1\),
and its geometric fibers are curves of genus \(2\). In addition, the Jacobian of
\(\mathcal C\) is an abelian scheme \(\mathcal J\) over \(\mathcal U\).

\medskip

\paragraph{\emph{The two points at infinity and the universal section.}}
Write $g(x):=4f_{\ua}(x)+h(x)^2$ and set
\[
  c:=\mathrm{coeff}_{x^6}(g)=4a_6+h_3^2 \in \Gamma(\cU,\mathcal O_{\cU}).
\]
Let $\cU_6\subset \cU$ be the open subset where $c\neq 0$ (equivalently, $\deg g=6$).
In general the two points at $\infty$ are only defined after adjoining a square root of $c$,
so we form the finite étale double cover
  $\pi:\cU^\infty \longrightarrow \cU_6$,
where $\cU^{\infty}:= \Spec\bigl(\mathcal O_{\cU_6}[u]/(u^2-c)\bigr)$.
On the weighted projective closure of \eqref{affine:model} over $\cU^\infty$ the fiber at infinity
consists of two sections $\infty_\pm=(1:\pm u:0)$

We then define
\[
  \alpha_{\mathrm{univ}} := [\infty_+ - \infty_-] \in \mathcal J(\cU^{\infty}).
\]
For any rational point $\ua \in \cU^\infty(\Q)$ we write $\Cu_{\ua}$, $\cJ_{\ua}$ and $\alpha_{\ua}$ for the corresponding
specializations.

We now study the torsion loci of this universal section.

\medskip

\paragraph{\emph{Algebraicity of torsion loci.}}

For every integer $n\ge 1$ we define the \emph{$n$-torsion locus} of $\alpha_{\mathrm{univ}}$ by
\[
  \cV_n := \{ \ua \in \cU^\infty : n\alpha_{\mathrm{univ}}(\ua )=0\in \cJ_{\ua}\,\}.
\]

\begin{proposition}\label{prop:algebraic-booker}
For every positive integer $n$, $\cV_n$ is a Zariski-closed subset of $\cU^\infty$ and is defined over $\Q$.
\end{proposition}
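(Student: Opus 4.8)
The plan is to show that each $\cV_n$ is the zero locus of a section of the universal Jacobian, and that such zero loci are always Zariski-closed. The key structural fact is that $\mathcal J \to \cU$ is an abelian scheme, so its pullback to the finite \'etale cover $\cU^\infty$ is again an abelian scheme $\mathcal J^\infty \to \cU^\infty$. The multiplication-by-$n$ map $[n]\colon \mathcal J^\infty \to \mathcal J^\infty$ is a finite flat morphism of schemes over $\cU^\infty$, and its kernel $\mathcal J^\infty[n]$ is a closed subgroup scheme, finite and flat over the base. The element $\alpha_{\mathrm{univ}}$ is by construction a section $s\colon \cU^\infty \to \mathcal J^\infty$, and $\cV_n$ is precisely the preimage $s^{-1}\bigl(\mathcal J^\infty[n]\bigr)$, equivalently the locus where $[n]\circ s$ agrees with the zero section.

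First I would make the identity ``$\cV_n = s^{-1}(\mathcal J^\infty[n])$'' precise, observing that for a rational (or geometric) point $\ua$ the condition $n\alpha_{\mathrm{univ}}(\ua)=0$ is exactly the condition that the composite section $[n]\circ s$ lands in the image of the zero section $e\colon \cU^\infty \to \mathcal J^\infty$. Since $\mathcal J^\infty$ is separated over $\cU^\infty$ (abelian schemes are proper, hence separated), the zero section $e$ is a closed immersion. I would then form the fiber product, or equivalently pull back the closed immersion $e$ along the morphism $[n]\circ s\colon \cU^\infty \to \mathcal J^\infty$: the resulting closed subscheme of $\cU^\infty$ is supported on exactly the points where $[n]\circ s = e$, which is $\cV_n$. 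This realizes $\cV_n$ as the scheme-theoretic equalizer of the two sections $[n]\circ s$ and $e$, manifestly closed.

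The rationality over $\Q$ comes essentially for free once the argument is set up correctly: every object in sight is defined over $\Q$. The base $\cU^\infty$ is a $\Q$-scheme, the abelian scheme $\mathcal J$ and its pullback $\mathcal J^\infty$ descend the universal curve $\Cu \to \cX$, which is written with coordinates over $\Q$, and the section $\alpha_{\mathrm{univ}} = [\infty_+ - \infty_-]$ is defined over $\cU^\infty$ by construction (the two points $\infty_\pm = (1 : \pm u : 0)$ are swapped by the Galois action only through $u \mapsto -u$, which is already accounted for in the \'etale double cover). Since $[n]$, $s$, and the zero section $e$ are all morphisms of $\Q$-schemes, their equalizer $\cV_n$ is a closed subscheme defined over $\Q$.

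The main obstacle, such as it is, is purely one of bookkeeping rather than genuine difficulty: one must be careful that the pointwise torsion condition ``$n\alpha_{\mathrm{univ}}(\ua)=0$'' really is cut out scheme-theoretically by the equalizer and not merely set-theoretically, so that closedness is not accidentally lost. This is where separatedness of the abelian scheme is doing the real work --- it guarantees the diagonal, and hence the zero section, is a closed immersion, so that the locus of agreement of two sections is closed. I expect no subtlety beyond confirming that $\mathcal J^\infty \to \cU^\infty$ inherits properness and separatedness from $\mathcal J \to \cU$ under the finite \'etale base change $\pi$, which is immediate since these properties are stable under base change. With that in hand, the Zariski-closedness and the field of definition both follow formally.
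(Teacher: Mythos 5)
Your proof is correct and follows essentially the same route as the paper: both arguments rest on separatedness of the abelian scheme $\mathcal J \to \cU^\infty$ and on preimages of closed subschemes being closed, with everything defined over $\Q$. The only (cosmetic) difference is that you pull back the zero section (a closed immersion by separatedness) along $[n]\circ s$, whereas the paper pulls back the relative diagonal $\Delta_{\mathcal J/\cU^\infty}$ along the pair $([n]\circ s,\, e)$ --- two standard packagings of the same fact.
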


\begin{proof}
Recall $\cJ\to\cU^\infty$ is an abelian scheme.  Let
$
  s:\cU^\infty \rightarrow \cJ
$
be the section corresponding to $\alpha_{\mathrm{univ}}$, $[n]:\cJ\to \cJ$ the
multiplication-by-$n$ morphism, and write
$
  e:\cU^\infty\rightarrow\mathcal J
$
for the zero section.

By definition, $\ua \in \cU^\infty$ lies in $\cV_n$ if and only if $[n](s(\ua))=e(\ua)$ in the fiber $\mathcal J_{\ua}$.
Equivalently,
\[
  \cV_n = \bigl\{\,\ua \in \cU^\infty : ([n]\circ s,e)(\ua)\in\Delta_{\mathcal J/\cU^\infty}\,\bigr\},
\]
where $([n]\circ s,e):\cU^\infty\to \mathcal J\times_{\cU^\infty}\mathcal J$ is the product morphism and
$\Delta_{\mathcal J/\cU^\infty}\subset \mathcal J\times_{\cU^\infty}\mathcal J$ is the relative diagonal. 

Since $\mathcal J\to \cU^\infty$ is an abelian scheme, it is proper and hence separated \cite{Hartshorne1977}*{Section 4}, so its relative diagonal is closed \cite{Hartshorne1977}*{Corollary 4.2}. Taking preimages of closed subschemes under morphisms preserves closedness.

Finally, all maps and
sections are defined over $\Q$, hence so is $\cV_n$.
\end{proof}

\medskip

\paragraph{\emph{The universal section is not torsion.}}

We now prove that $\alpha_{\mathrm{univ}}$ has infinite order in $\mathcal J(\cU^\infty)$. For each of the $16$ possible choices of $h(x)$,
it suffices to find a single specialization $\ua \in \cU^\infty$ such that $\alpha_{\ua}$ is non-torsion. 

Take $h=0$ and consider the curve
\begin{equation}\label{eq:tengely-curve}
  C :\quad Y^2 = X^6 + 18X^5 + 75X^4 + 120X^3 + 120X^2 + 72X + 28.
\end{equation}
By \cite{tengely_thesis}*{Section 4.2}, we know that its Jacobian has rank $1$ and is generated by the divisor
class $[\infty_+-\infty_-]$. This curve corresponds to $\ua = (1,18,75,120,120,72,28) \in \cU^{\infty}(\mathbb Q)$ and 
hence $\alpha_{\ua}$ has infinite order, from which we deduce that $\alpha_{\mathrm{univ}}$ is non-torsion in
$\mathcal J(\cU^\infty)$.

\begin{remark}
For the remaining $15$ choices of $h(x)\in\{0,1\}^4$, we ran an explicit search in the \href{https://www.lmfdb.org/Genus2Curve/Q/}{LMFDB}. For each such $h(x)$ we found at least one specialization
$\ua\in\cU^\infty(\Q)$ for which the associated genus-$2$ curve has Jacobian of analytic rank $1$
and trivial torsion subgroup.
In all cases, the corresponding sextic satisfies the condition
$c=4a_6+h_3^2\in(\Q^\times)^2$, ensuring that the two points at infinity are $\Q$-rational.
For brevity, we do not list these additional examples here but they are recorded in \cite{OnlineComplement}*{remark22\_examples.md file}.
\end{remark}

We now turn the geometric input above into a quantitative statement
for integral models of bounded height. The key point is that, for a curve
$C/\Q$ given by an integral model \eqref{eq:booker-weierstrass}, any torsion specialization
$\alpha_a\in J_a(\Q)$ has order bounded in terms of the height of the model. We need the following technical result.

\begin{proposition} \label{prop:faltings-naive} Let $C$ be a hyperelliptic curve with an integral model given by the equation \eqref{eq:booker-weierstrass} and let $H(C) := \max\limits_{0 \leq i \leq 6} |a_i|$, be the naive height of $C$. Let $\Jac(C)$ be its Jacobian, equipped with its canonical principal polarization, and let $h_F(\Jac(C))$ be its normalized stable Faltings height (as defined in \cite{Pazuki2012}*{Definition 2.1}). Then, for $H(C)$ large enough, we have
$$ h_F(\Jac(C)) \ll \log H(C)$$
where the implied constant is explicit and independent of $C$.

\end{proposition}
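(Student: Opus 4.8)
The plan is to bound the Faltings height $h_F(\Jac(C))$ by relating it to the classical invariants of the model~\eqref{eq:booker-weierstrass} and then bounding those invariants in terms of $H(C)$. The Faltings height of a principally polarized abelian surface is controlled by the modular height, so the key is to pass from the arithmetic of the curve to the period/modular side and back. For genus-$2$ Jacobians there is a well-developed dictionary between the Faltings height and the Igusa invariants of the curve, together with an archimedean contribution measured by a Petersson-type norm of a Siegel modular form (the relevant object being $\chi_{10}$ or an Eisenstein-type normalization). Concretely, I would invoke a result in the spirit of Pazuki \cite{Pazuki2012} expressing $h_F(\Jac(C))$ as a sum of a ``finite'' part governed by $\log |\Delta(f,h)|$ together with the heights of the Igusa--Clebsch invariants, and an ``infinite'' (archimedean) part.

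First I would make the finite contribution explicit. The discriminant $\Delta(f,h)=2^{-12}\disc(4f+h^2)$ is a polynomial of bounded degree in the coefficients $a_i$ (and the fixed $h_i\in\{0,1\}$), so $|\Delta(f,h)| \ll H(C)^{N}$ for an absolute constant $N$, giving $\log|\Delta(f,h)| \ll \log H(C)$ with the correct shape. The Igusa--Clebsch invariants $I_2,I_4,I_6,I_{10}$ are likewise polynomials in the $a_i$ of bounded weighted degree, so each has (absolute logarithmic) height $\ll \log H(C)$; feeding these into the height of the point in the appropriate weighted projective moduli space $\mathcal A_2$ again yields a bound $\ll \log H(C)$. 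The second step is to control the archimedean term, which is where the genuine analysis enters: one must bound from above (and the normalization from below) the quantity $-\log\|\chi\|(\tau)$ at a period matrix $\tau$ in the Siegel upper half space attached to $C$. This is bounded in terms of $\log\max(1,\operatorname{Im}\tau)$ and $\log\det(\operatorname{Im}\tau)$, which in turn are controlled by $\log H(C)$ once one knows the period matrix lies in a Siegel fundamental domain with entries of size at most polynomial in the coefficients.

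The main obstacle I expect is the archimedean estimate: ensuring that the normalization of the stable Faltings height from \cite{Pazuki2012}*{Definition 2.1} is matched exactly, and that the lower bound on the Petersson norm (equivalently, the upper bound on the analytic contribution) does not degenerate as $C$ approaches the boundary of the moduli space, i.e.\ as $\Delta(f,h)\to 0$. The point is that the implied constant must be \emph{independent of $C$}, so one cannot absorb a term that blows up near bad reduction; the resolution is that the potentially singular archimedean contribution is exactly cancelled by the $\log|\Delta|$ term in the comparison formula (this is the classical Silverman/Faltings principle that the height is a sum in which the archimedean and the discriminant contributions balance near the boundary). I would therefore organize the proof around the stability statement: using the inequality of \cite{Pazuki2012} comparing $h_F(\Jac(C))$ to the theta height / Igusa height plus a bounded error, and then bounding the Igusa height by $\log H(C)$ as above.

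Finally, I would assemble the pieces: combining the bound on the height of the Igusa invariants with the archimedean estimate through Pazuki's comparison inequality yields $h_F(\Jac(C)) \ll \log H(C)$ for $H(C)$ large, with an explicit constant depending only on the (bounded) degrees of the invariant polynomials and on the absolute constants in the height comparison. The explicitness and $C$-independence of the constant follow because every intermediate bound is polynomial in the coefficients with exponents fixed by the (weighted) degrees of $\Delta$, $I_2,I_4,I_6,I_{10}$, none of which depend on the particular curve.
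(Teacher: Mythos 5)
Your high-level skeleton (bound some modular/theta-type height by $\log H(C)$, then transfer to $h_F$ via a Pazuki-style comparison with bounded error) matches the paper's strategy, and your bound on the finite part — Igusa--Clebsch invariants and $\Delta(f,h)$ are polynomials of bounded degree in the $a_i$, hence have height $\ll \log H(C)$ — is fine. The paper's actual proof is a two-step citation: Kieffer's explicit bound $h_{\theta,4}(\Jac(C)) \ll \log H(C)$, obtained from Thomae's formulae (which express the theta constants as products of differences of roots of the sextic, so the archimedean bound reduces to elementary root-size estimates), followed by Pazuki's comparison $\bigl|h_{\theta,4}-\tfrac12 h_F\bigr| \ll \log(h_{\theta,4}+2)$, whose error term is $\ll \log\log H(C)$.

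The genuine gap in your proposal is the archimedean step, and it is not merely a missing detail. First, your claim that $-\log\|\chi\|(\tau)$ ``is bounded in terms of $\log\max(1,\operatorname{Im}\tau)$ and $\log\det(\operatorname{Im}\tau)$'' is false for the relevant form: $\chi_{10}$ is a \emph{cusp} form, so $-\log|\chi_{10}(\tau)|$ grows \emph{linearly} in the entries of $\operatorname{Im}\tau$ as $\tau$ approaches the boundary of the fundamental domain, not logarithmically. Second, your input hypothesis — period matrix entries ``of size at most polynomial in the coefficients'' — is both unproven and, once the first point is corrected, far too weak: linear growth in $\operatorname{Im}\tau$ combined with $\operatorname{Im}\tau \ll H(C)^{N}$ would only give $h_F \ll H(C)^{N}$, destroying the logarithmic bound. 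What one actually needs is $\operatorname{Im}\tau \ll \log H(C)$, and proving that requires a quantitative relation between the period matrix and the invariants of the curve. Third, the ``exact cancellation'' of the archimedean blow-up against $\log|\Delta|$, which you correctly identify as the crux, is asserted but never executed; making it precise is exactly the content of the Klein/Thomae-type identity relating $\chi_{10}$ at the period matrix to $\operatorname{disc}(4f+h^2)$ — i.e., precisely the ingredient that Kieffer's theorem (cited in the paper) packages into a ready-made bound. So as written the proof does not close; completed honestly, it would amount to re-proving Kieffer's estimate rather than providing an independent route.
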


\begin{proof}
Kieffer used Thomae's formulae (\cite{Mumford1984}*{IIIa.8.1}) and height bounds for root differences of polynomials (see \cite{Kieffer2022}*{Proposition 5.4}) to prove an explicit upper-bound for the level-4 $\theta$-height (see \cite{Kieffer2022}*{end of proof of Proposition 5.17}). This bound implies that, for $H(C) \geq 2$,
$$ h_{\theta,4}(\Jac(C)) \ll \log H(C),$$  where the implied constant is explicit. Note that Kieffer uses a \textit{naive} height for rational functions, which is  $ \ll \log H(C)$ (see Remark 3 after \cite{Kieffer2022}*{Definition 5.1}).

We next use Pazuki's comparison theorem between $\theta$-height and (stable) Faltings height $h_F$ \cite{Pazuki2012}*{Corollary 1.3(1)} with $g=2$ and $r=4$ which gives

\[
\bigl|h_{\theta,4}(\Jac(C))-\tfrac12 h_F(\Jac(C))\bigr|\ \ll\ \log(h_{\theta,4}(\Jac(C))+2),
\]
with an effective implied constant which is independent of $C$. 

Since $\log(h_{\theta,4}(\Jac(C))+2) \ll \log \log H(C)$ for large $H(C)$, the conclusion follows. 

\end{proof}

Theorem 1.2 of Gaudron--R\'emond \cite{GaudronRemond2025} (with $K=\Q$ and $g=2$) implies that
$|\Jac(C)(\Q)_{tors}|\ll \max\{1,h_F(\Jac(C))\}^2$.
Combining with Proposition~\ref{prop:faltings-naive}, we obtain the following corollary.

\begin{corollary}\label{cor:torsion-logH}
Let $C/\Q$ be a smooth genus $2$ curve with an integral model
given by \eqref{eq:booker-weierstrass}. 
Then for $H(C)$ sufficiently large,
\[
|\Jac(C)(\Q)_{tors}|\ \ll\ (\log H(C))^2,
\]
with an effective absolute implied constant which is independent of $C$.
\end{corollary}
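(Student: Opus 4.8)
The plan is to simply chain the two quoted results together. The corollary is stated as a direct consequence of Proposition~\ref{prop:faltings-naive} and the cited Gaudron--Rémond bound, so the proof is short and essentially a substitution; the real content lies in the two inputs, both of which I am allowed to assume.

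First I would recall the Gaudron--Rémond estimate exactly as stated in the excerpt: Theorem 1.2 of \cite{GaudronRemond2025}, applied with $K=\Q$ and $g=2$, gives
\[
  |\Jac(C)(\Q)_{tors}|\ \ll\ \max\{1,\,h_F(\Jac(C))\}^2,
\]
with an effective implied constant depending only on $K$ and $g$, hence absolute here. Next I would invoke Proposition~\ref{prop:faltings-naive}, which asserts that for $H(C)$ large enough one has $h_F(\Jac(C))\ll \log H(C)$, again with an explicit constant independent of $C$. For $H(C)$ sufficiently large we have $\log H(C)\ge 1$, so $\max\{1,h_F(\Jac(C))\}\ll \log H(C)$, and substituting into the Gaudron--Rémond bound yields
\[
  |\Jac(C)(\Q)_{tors}|\ \ll\ (\log H(C))^2,
\]
as claimed. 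Composing two effective, $C$-independent implied constants gives an effective absolute implied constant, which is what the statement promises.

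There is no serious obstacle at the level of the corollary itself; the only point requiring mild care is bookkeeping the hypotheses on $H(C)$. Proposition~\ref{prop:faltings-naive} is only valid for $H(C)$ large (it relies on Kieffer's bound for $H(C)\ge 2$ and on $\log(h_{\theta,4}+2)\ll\log\log H(C)$, which needs $H(C)$ large), and the Gaudron--Rémond bound is quadratic in $\max\{1,h_F\}$ rather than in $h_F$ itself. Both issues are handled by passing to $H(C)$ sufficiently large so that $h_F(\Jac(C))\ge 1$, which collapses the $\max$ and lets the two bounds compose cleanly. The genuinely hard work — the explicit $\theta$-height estimate via Thomae's formulae and Pazuki's comparison theorem — has already been absorbed into Proposition~\ref{prop:faltings-naive}, so nothing further is needed here.
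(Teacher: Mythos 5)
Your proposal is correct and is essentially identical to the paper's own argument, which likewise just composes the Gaudron--R\'emond bound $|\Jac(C)(\Q)_{tors}|\ll\max\{1,h_F(\Jac(C))\}^2$ with Proposition~\ref{prop:faltings-naive}. One tiny remark: large $H(C)$ does not force $h_F(\Jac(C))\ge 1$ as your closing paragraph suggests, but this is immaterial since, as you correctly argue in the main text, $\log H(C)\ge 1$ already gives $\max\{1,h_F(\Jac(C))\}\ll\log H(C)$.
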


\begin{proposition}\label{prop:torsionlocus}
Using the notations above, define
\[
T(X):=\Bigl\{\ua \in \cU^{\infty}(\mathbb Z): \Ht(\ua)\le X,\ \alpha_{\ua} \in \cJ_{\ua}(\Q)\ \text{is torsion}\Bigr\},
\]
where $\Ht(\ua):=\max_{0\le i\le 6}|a_{i}|$.
Then, for $X$ sufficiently large, one has
\[
\#T(X)\ \ll\ X^{6}(\log X)^{6},
\]
with an effective absolute implied constant independent of $X$. In particular,
\[
\lim_{X \to \infty} \frac{\#T(X)}{\#\{\ua \in \cU^{\infty}(\Z):\Ht(\ua)\le X\}} = 0.
\]
\end{proposition}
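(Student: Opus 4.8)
The plan is to combine the closedness of the torsion loci (Proposition~\ref{prop:algebraic-booker}) and the non-torsion of $\alpha_{\mathrm{univ}}$ with the torsion bound of Corollary~\ref{cor:torsion-logH}, turning everything into a uniform lattice-point count. First I would bound the possible torsion order: for any $\ua\in\cU^\infty(\Z)$ with $\Ht(\ua)\le X$ the point $\alpha_{\ua}\in\cJ_{\ua}(\Q)$ is $\Q$-rational, so if it is torsion its order divides $|\cJ_{\ua}(\Q)_{\mathrm{tors}}|\ll(\log X)^2$ by Corollary~\ref{cor:torsion-logH}. Hence, for $X$ large, that order is at most $N:=c_0(\log X)^2$ for an absolute constant $c_0$, and therefore $\ua\in\cV_n$ with $n=\mathrm{ord}(\alpha_{\ua})\le N$. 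This gives the set-theoretic inclusion
\[
  T(X)\ \subseteq\ \bigcup_{1\le n\le N}\bigl(\cV_n(\Z)\cap\{\Ht\le X\}\bigr).
\]

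Next I would control each $\cV_n$ uniformly in $n$. By Proposition~\ref{prop:algebraic-booker} every $\cV_n$ is Zariski-closed and defined over $\Q$; and since the specialization \eqref{eq:tengely-curve} (together with the fifteen specializations found in the LMFDB, one for each remaining choice of $h$) shows $n\alpha_{\mathrm{univ}}\neq0$ on each of the $16$ irreducible components of $\cU^\infty$, each $\cV_n$ is a \emph{proper} closed subset, so $\dim\cV_n\le 6$. To make the count uniform I would bound the algebraic complexity of $\cV_n$: writing $\alpha_{\mathrm{univ}}$ in Mumford/Kummer coordinates (polynomials in $\ua$ and $u$, reduced modulo $u^2-c$) and applying the explicit multiplication-by-$n$ map on the universal Jacobian --- the genus-$2$ analogue of the division polynomials, whose degree on the Kummer surface is $n^2$ (see e.g.\ \cite{Mumford1984}, \cite{Kieffer2022}) --- the condition $n\alpha_{\mathrm{univ}}=0$ is cut out by equations of $\ua$-degree $\ll n^2$. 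Reducing one such equation modulo $u^2-c$ and using that $c=4a_6+h_3^2$ is not a square in $\Q[\ua]$, I obtain a \emph{nonzero} polynomial $P_n$ on $\A^7$ of degree $\ll n^2$ vanishing on the image $\pi(\cV_n)$; it is nonzero precisely because $\cV_n$ is proper.

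I would then finish with a Schwartz--Zippel estimate: a nonzero polynomial of degree $D$ in $7$ variables has $\ll D\,X^6$ zeros in $\{-X,\dots,X\}^7$, so with $D\ll n^2$ and at most two values of $u$ per $\ua$ we get $\#\bigl(\cV_n(\Z)\cap\{\Ht\le X\}\bigr)\ll n^2 X^6$. Summing over $n\le N$,
\[
  \#T(X)\ \ll\ X^6\sum_{n\le N}n^2\ \ll\ X^6 N^3\ \ll\ X^6(\log X)^6,
\]
which is the asserted bound. For the final limit, the denominator $\#\{\ua\in\cU^\infty(\Z):\Ht(\ua)\le X\}$ is $\asymp X^{13/2}$, since requiring $c=4a_6+h_3^2$ to be a perfect square confines $a_6$ to a set of size $\asymp X^{1/2}$ while $a_0,\dots,a_5$ range freely; as $X^6(\log X)^6=o(X^{13/2})$, the ratio tends to $0$.

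The hard part will be the degree bound $\deg\cV_n\ll n^2$. Proposition~\ref{prop:algebraic-booker} yields only closedness, and non-torsion yields only a drop in dimension; but summing over $n\le(\log X)^2$ forces genuine uniformity in $n$, and the exact exponent $6$ in $(\log X)^6$ is nothing but $N^3$ coming from $\sum_{n\le N}n^2$. Establishing the quadratic degree growth rigorously requires an explicit model of multiplication-by-$n$ on the universal genus-$2$ Jacobian (Kummer-surface arithmetic / hyperelliptic division polynomials), and this is where the real work lies; the Schwartz--Zippel count and the denominator estimate are then routine.
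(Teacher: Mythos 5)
Your proposal is correct and follows the same skeleton as the paper's proof: the inclusion $T(X)\subseteq\bigcup_{n\le N(X)}\cV_n$ with $N(X)\asymp(\log X)^2$ coming from Corollary~\ref{cor:torsion-logH}, properness of each $\cV_n$ from the non-torsion specializations, elimination of $u$ by a norm argument down to $\A^7$, a Schwartz--Zippel count of $\ll \deg(F_n)\,X^6$ integral zeros, summation yielding $X^6N(X)^3\ll X^6(\log X)^6$, and the denominator $\asymp X^{13/2}$ (your justification of that count, via the requirement that $4a_6+h_3^2$ be a perfect square, is in fact more explicit than the paper's appeal to the degree-$2$ cover). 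The single point of divergence is the key uniform degree bound $\deg\cV_n\ll n^2$, which you rightly flag as the hard part: you propose explicit multiplication-by-$n$ formulas in Mumford/Kummer coordinates (genus-$2$ division polynomials, of degree $n^2$ on the Kummer surface), whereas the paper obtains the same bound abstractly, by fixing a symmetric, relatively very ample line bundle $\cL$ on $\cJ\to\cU^\infty$ inducing a projectively normal embedding and invoking $[n]^*\cL\simeq\cL^{\otimes n^2}$, so that $[n]$ is given by homogeneous forms of degree $n^2$; composing with the section $\alpha_{\mathrm{univ}}$ then cuts out $\cV_n$ by conditions of degree $\ll n^2$. The paper's route buys uniformity in $n$ without ever constructing explicit genus-$2$ division polynomials; your route, if completed (e.g.\ by controlling the $\ua$-degree of the division polynomials through isobaric-weight considerations, as one does for elliptic curves), would make the polynomials $F_n$ and all implied constants fully explicit. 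Neither argument is more rigorous than the other on the residual subtlety that the coefficients of the forms defining $[n]$ are functions on the base whose $\ua$-degrees must also be shown to be $\ll n^2$.
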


\begin{proof}
Let $\ua\in \cU^{\infty}(\Z)$ with $\Ht(\ua)\le X$ and assume that $\alpha_{\ua}$ is torsion.
By Corollary~\ref{cor:torsion-logH} (applied to the integral model parametrized by $\ua$), we have
\[
\bigl|\cJ_{\ua}(\Q)_{\mathrm{tors}}\bigr|\ \ll\ (\log H(C_{\ua}))^{2}\ \ll\ (\log X)^{2},
\]
for $X$ sufficiently large. In particular,
\[
\bigl|\cJ_{\ua}(\Q)_{\mathrm{tors}}\bigr|\ \le\ N(X),
\qquad
N(X):=\bigl\lceil \kappa(\log X)^{2}\bigr\rceil
\]
for some absolute constant $\kappa>0$. Hence there exists an integer $1\le n\le N(X)$ such that
$[n]\alpha_{\ua}=0$, i.e.\ $\ua\in \cV_{n}$. Therefore
\begin{equation}\label{eq:T-in-union-Vn}
T(X) \subseteq\ \bigcup_{n=1}^{N(X)}\Bigl(\cV_{n}(\Z)\cap\{\ua\in\cU^{\infty}(\Z):\Ht(\ua)\le X\}\Bigr).
\end{equation}

For each $n\ge 1$, Proposition~\ref{prop:algebraic-booker} shows that $\cV_{n}$ is Zariski-closed in
$\cU^{\infty}$ (and defined over $\Q$). Moreover, since $\alpha_{\mathrm{univ}}$ is non-torsion in
$\cJ(\cU^{\infty})$, the section $[n]\alpha_{\mathrm{univ}}$ is not identically zero, hence
$\cV_{n}\subsetneq \cU^{\infty}$ is a \emph{proper} closed subset.

We now bound integral points on $\cV_{n}$ uniformly in terms of $n$.
Since $\cU^{\infty}$ is smooth over $\Q$ (hence normal and locally Noetherian), the abelian scheme
$\mathcal J\to \cU^{\infty}$ is projective (see for instance \cite{FaltingsChai}*{Remarks 1.10(a)}) and admits a $\cU^{\infty}$-ample line bundle. Fix such a line bundle $\mathcal L$, replace it by
$\mathcal L\otimes[-1]^{*}\mathcal L$ to assume it is symmetric, and then replace $\mathcal L$ by a fixed
tensor power so that it is relatively very ample and induces a projectively normal embedding
(Zariski-locally on $\cU^{\infty}$) into $\mathbb P^{N}_{\cU^{\infty}}$.
For symmetric $\mathcal L$ one has
\[
[n]^{*}\mathcal L \ \simeq\ \mathcal L^{\otimes n^{2}}
\]
(cf.\ \cite{LombardoNotes}*{Corollary 3.3}); by projective normality it follows that, in these projective
coordinates, the morphism $[n]:\mathcal J\to\mathcal J$ is given by homogeneous polynomials of degree $n^{2}$.
Composing with the section $s:\cU^{\infty}\to\mathcal J$ corresponding to $\alpha_{\mathrm{univ}}$ and comparing
with the zero section, we obtain finitely many polynomial conditions on $\cU^{\infty}$ of total degree $\ll n^{2}$
cutting out $\cV_{n}$. Since $\cV_{n}$ is proper, at least one of these conditions is nontrivial. Recall that, on one fixed \(h\)-component,
\[
   \cU^\infty
   = \operatorname{Spec}\bigl(\mathcal O_{\mathcal U_6}[u]/(u^2-c)\bigr),
   \qquad c=4a_6+h_3^2,
\]
so \(u\) is the coordinate on the quadratic cover
\(\cU^\infty \to \mathcal U_6 \subset \mathbb A^7\). Thus, viewing
$\cU^{\infty}$ as a finite cover of $\cU_{6}\subset \A^{7}$, we may eliminate $u$ and obtain a nonzero polynomial
$F_{n}\in \Z[a_{6},\dots,a_{0}]$ with $\deg(F_{n})\ll n^{2}$ such that every $\ua\in \cV_{n}$ satisfies
$F_{n}(a_{6},\dots,a_{0})=0$.

Let \(S_X=[-X,X]\cap \mathbb Z\). We use the hypersurface estimate
\[
\#\{\mathbf z\in S_X^m:F(\mathbf z)=0\}\le (\deg F)|S_X|^{m-1}
\]
for every nonzero \(F\in \mathbb Z[x_1,\ldots,x_m]\). Indeed, if \(F\) has degree
\(e\) in one variable, then the fibers over points where the leading coefficient is
nonzero contribute at most \(e|S_X|^{m-1}\), while the fibers where the leading
coefficient vanishes are bounded inductively by
\((\deg F-e)|S_X|^{m-1}\). Applying this with \(m=7\) and \(F=F_n\), and using
that \(U^\infty\to U_6\) is given by \(u^2=c\), so each coefficient tuple has at
most two lifts to \(U^\infty(\mathbb Z)\), gives
\[
\#\bigl(V_n(\mathbb Z)\cap\{H\le X\}\bigr)
\le
2\,\#\{\mathbf a\in S_X^7:F_n(\mathbf a)=0\}
\ll (\deg F_n)X^6\ll n^2X^6,
\]
with absolute implied constants independent of \(n\) and \(X\). Combining with \eqref{eq:T-in-union-Vn} yields
\[
\#T(X)\ \ll\ X^{6}\sum_{n=1}^{N(X)} n^{2}\ \ll\ X^{6}\,N(X)^{3}\ \ll\ X^{6}(\log X)^{6}.
\]

Finally, since $\cU^{\infty}\to \cU_{6}\subset \A^{7}$ is finite of degree $2$, we can count that
\[
\#\{\ua \in \cU^{\infty}(\Z):\Ht(\ua)\le X\}\ \asymp\ X^{6}\sqrt{X}\ =\ X^{13/2}.
\]
Hence
\[
\frac{\#T(X)}{\#\{\ua \in \cU^{\infty}(\Z):\Ht(\ua)\le X\}}
\ \ll\ \frac{X^{6}(\log X)^{6}}{X^{13/2}}
\ =\ X^{-1/2}(\log X)^{6}\ \xrightarrow[X\to\infty]{}\ 0,
\]
as claimed.
\end{proof}

As a consequence of Proposition \ref{prop:torsionlocus} we obtain the following immediate corollaries.

\begin{corollary} \label{cor:s1box} The number of pairs $(f,h) \in S_1(X)$ for which $$\left\{ \begin{array}{l} \deg(f) = 6, \\ \Delta(f,h) \neq 0, \\ \mathrm{coeff}_{x^6}(4f(x)+h(x)^2) \text{ is a square, and } \\ \text{the divisor } [\infty_{+} - \infty_{-}] \text{ is torsion on the Jacobian of } y^2+h(x)y=f(x)  \end{array} \right. $$
is of the order $O(X^6(\log X)^6)$, as $X\to \infty$.
\end{corollary}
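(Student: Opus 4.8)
The plan is to recognize the four listed conditions as a repackaging of membership in the torsion locus $T(X)$ of Proposition~\ref{prop:torsionlocus}, summed over the finitely many choices of $h$. First I would fix one of the $16$ choices of $h\in\{0,1\}^4$ parametrized by $\cH$, so that a pair $(f,h)\in S_1(X)$ is recorded by its coefficient vector $\ua=(a_6,\dots,a_0)\in\Z^7$ with $|a_i|\le X$, i.e.\ $\Ht(\ua)\le X$. The conditions $\Delta(f,h)\neq 0$ and $\deg f=6$ place $\ua$ in the open subset $\cU$ with $a_6\neq 0$; I would then observe that integrality forces $c=4a_6+h_3^2\neq 0$, since $h_3^2\in\{0,1\}$ makes $c\in\{4a_6,\,4a_6+1\}$, neither of which vanishes for a nonzero integer $a_6$. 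Hence such a pair automatically lands in $\cU_6$, where the cover $\pi:\cU^\infty\to\cU_6$ is finite étale of degree $2$.

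Next I would match the remaining two conditions to the defining data of $\cU^\infty$. Since $c\in\Z$, the requirement that $c$ be a square is equivalent to the existence of $u\in\Z$ with $u^2=c$ (an integer is a square in $\Q$ iff it is a perfect square), i.e.\ to the existence of a $\Z$-point of $\cU^\infty$ above $\ua$; and $c\neq 0$ guarantees the two lifts $u=\pm\sqrt{c}$ are distinct, so the points at infinity $\infty_\pm=(1:\pm u:0)$ are distinct and $\Q$-rational. Under either lift the divisor $[\infty_+-\infty_-]$ equals $\pm\alpha_{\ua}$, so the torsion condition in the corollary is precisely the condition $\alpha_{\ua}\in\cJ_{\ua}(\Q)_{\mathrm{tors}}$ defining $T(X)$. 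Thus, for fixed $h$, the good pairs $(f,h)$ are exactly the image $\pi(T(X))$ of $T(X)$ under the degree-$2$ cover $\pi$, and in particular their number is at most $\#T(X)$.

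Finally I would invoke Proposition~\ref{prop:torsionlocus} to get $\#T(X)\ll X^6(\log X)^6$ for each fixed $h$, with an absolute implied constant, and sum over the $16$ components of $\cH$; the factor $16$ is absorbed into the implied constant, yielding the claimed $O(X^6(\log X)^6)$. There is no genuinely hard step: the only points requiring care are the bookkeeping across the $16$ sheets of $\cH$ and the verification that $\Ht$ ignores both $h$ and the fibre coordinate $u$, so that the degree-$2$ cover does not distort the count beyond a constant factor. All of the arithmetic input—the effective torsion bound feeding Proposition~\ref{prop:torsionlocus}—has already been carried out upstream.
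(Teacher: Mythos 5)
Your proposal is correct and follows exactly the paper's route: the paper states this corollary as an immediate consequence of Proposition~\ref{prop:torsionlocus}, and your argument simply makes explicit the identification of the four conditions with membership in $T(X)$ (per fixed $h$, via the degree-$2$ cover $\pi$), then sums over the $16$ components of $\cH$. Your observation that integrality and $\deg f=6$ automatically force $c=4a_6+h_3^2\neq 0$ is a worthwhile piece of bookkeeping that the paper leaves implicit.
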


Let \(S_1^\square(X)\) be the subset of \(S_1(X)\) defined by the first three
conditions in Corollary \ref{cor:s1box}. We claim that
$\#S_1^\square(X)\asymp X^{13/2}$.
Indeed, fix \(h\in\{0,1\}^4\) and write $
c=\operatorname{coeff}_{x^6}(4f+h^2)=4a_6+h_3^2$.
The condition that \(c\) is a square gives \(\asymp X^{1/2}\) possible leading
coefficients \(a_6\) with \(|a_6|\le X\) and \(a_6\ne 0\): if \(h_3=0\), then
\(a_6=t^2\) with \(1\le t^2\le X\), while if \(h_3=1\), then
\(a_6=t(t+1)\) with \(1\le t(t+1)\le X\). For each such \(a_6\), the remaining
six coefficients \(a_5,\ldots,a_0\) have \((2X+1)^6\) possible values before
imposing smoothness. For fixed \(h\) and \(a_6\), the discriminant condition
\(\Delta(f,h)=0\) is a nonzero polynomial condition in \(a_5,\ldots,a_0\): indeed,
the lower coefficients of \(4f+h^2\) are affine coordinates over \(\mathbb Q\),
and there exist separable sextics with any prescribed nonzero leading coefficient.
Thus the elementary hypersurface estimate used above shows that the singular
choices contribute \(O(X^5)\) for each admissible \(a_6\), hence
\(O(X^{11/2})\) choices in total after summing over \(a_6\) and over the finitely
many choices of \(h\). Therefore
\[
\#S_1^\square(X)\asymp X^{1/2}\cdot X^6=X^{13/2}.
\]
Together with Corollary \ref{cor:s1box}, this shows that the proportion of
\((f,h)\in S_1^\square(X)\) for which \([\infty_+-\infty_-]\) is torsion on
\(\operatorname{Jac}(y^2+h(x)y=f(x))\) is
\[
O\bigl(X^{-1/2}(\log X)^6\bigr).
\]
This allows us to prove part (2) of Theorem \ref{th:1}.

\begin{corollary} \label{cor:3} For \((f,h)\in S_1^\square(X)\), write \(J_{(f,h)}\)
for the Jacobian of \(y^2+h(x)y=f(x)\). Then, as \(X\to\infty\),
\[
1-O\bigl(X^{-1/2}(\log X)^6\bigr)
\le
\frac{\#\{(f,h)\in S_1^\square(X): \operatorname{rank} J_{(f,h)}(\mathbb Q)\ge 1\}}
{\#S_1^\square(X)}
\le 1.
\]
In particular,
\[
\liminf_{X\to\infty}
\frac{
\log \left|\{(f,h)\in S_1(X): \operatorname{rank} J_{(f,h)}(\mathbb Q)\ge 1\}\right|
}
{\log |S_1(X)|}
\ge \frac{13}{14}.
\]
\end{corollary}
\begin{proof}
Let \(E_1(X)\subset S_1^\square(X)\) be the subset for which
\(\alpha_{(f,h)}=[\infty_+-\infty_-]\in J_{(f,h)}(\mathbb Q)\) is torsion. By
Corollary \ref{cor:s1box} and the count of \(S_1^\square(X)\) above,
\[
\frac{\#E_1(X)}{\#S_1^\square(X)}
=O\bigl(X^{-1/2}(\log X)^6\bigr).
\]
If \(J_{(f,h)}(\mathbb Q)\) has rank \(0\), then every rational point of
\(J_{(f,h)}(\mathbb Q)\) is torsion; in particular \(\alpha_{(f,h)}\) is torsion.
Thus
\[
S_1^\square(X)\setminus E_1(X)
\subseteq
\{(f,h)\in S_1^\square(X): \operatorname{rank}J_{(f,h)}(\mathbb Q)\ge 1\}.
\]
This gives the displayed lower bound, while the upper bound is trivial. Since
\(\#S_1^\square(X)\asymp X^{13/2}\) and \(\#S_1(X)\asymp X^7\), the claimed logarithmic
density lower bound of \(\frac{13}{14}\) also follows.
\end{proof}

\begin{proposition}\label{prop:rank2-independence-generic}
Assume $h(x)=0$.  Define the subvariety
\[
\cU^{\infty}_{1,1}\ :=\ \bigl\{\ua=(a_6,\dots,a_0)\in \cU^{\infty}\ :\ a_6=1,\ a_0=1\bigr\}.
\]
For $\ua\in \cU^{\infty}_{1,1}$ we write
\[
C_{\ua}:\quad y^2=x^6+a_5x^5+a_4x^4+a_3x^3+a_2x^2+a_1x+1,
\]
and we denote by $\infty_{\pm}$ the two points at infinity.  Let $\cJ\to \cU^{\infty}_{1,1}$
be the restriction of the universal Jacobian, and let
\[
P_{\mathrm{univ}}=(0,-1)\in \cC(\cU^{\infty}_{1,1})
\qquad\text{and}\qquad
\beta_{\mathrm{univ}}:=[P_{\mathrm{univ}}-\infty_{+}]\in \cJ(\cU^{\infty}_{1,1}).
\]
Then the two sections $\alpha_{\mathrm{univ}}|_{\cU^{\infty}_{1,1}}$ and $\beta_{\mathrm{univ}}$
are $\Z$--linearly independent in $\cJ(\cU^{\infty}_{1,1})$.
\end{proposition}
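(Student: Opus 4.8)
The plan is to deduce global $\Z$--linear independence from independence in a single fiber, via specialization. Evaluation at a rational point $\ua_0\in\cU^{\infty}_{1,1}(\Q)$ defines a group homomorphism $\cJ(\cU^{\infty}_{1,1})\to\cJ_{\ua_0}(\Q)$ sending $\alpha_{\mathrm{univ}}|_{\cU^{\infty}_{1,1}}\mapsto\alpha_{\ua_0}$ and $\beta_{\mathrm{univ}}\mapsto\beta_{\ua_0}$. Hence any relation $m\,\alpha_{\mathrm{univ}}+n\,\beta_{\mathrm{univ}}=0$ over the base specializes to $m\,\alpha_{\ua_0}+n\,\beta_{\ua_0}=0$ in the Mordell--Weil group of the fiber, so it suffices to exhibit one point $\ua_0$ at which $\alpha_{\ua_0},\beta_{\ua_0}$ are $\Z$--linearly independent. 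This reduction needs no irreducibility of the base; in particular it is harmless that over $a_6=1$ the cover $\pi$ splits (with $h=0$ one has $c=4a_6=4$, a square, so $u=\pm 2$ and $\cU^{\infty}_{1,1}$ has two sheets). A relation over the whole base would hold on each sheet, so a witness on either sheet refutes it; swapping sheets merely exchanges $\infty_+\leftrightarrow\infty_-$ and flips signs, which is irrelevant for independence.

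Both sections are genuinely defined on $\cU^{\infty}_{1,1}$: since $a_0=1$ the equation gives $y^2=1$ at $x=0$, so $P_{\mathrm{univ}}=(0,-1)$ is a rational section and $\beta_{\mathrm{univ}}=[P_{\mathrm{univ}}-\infty_+]$ is well defined, while $\alpha_{\mathrm{univ}}=[\infty_+-\infty_-]$ is the restriction of the section built earlier. It remains to produce one integral tuple $\ua_0=(1,a_5,a_4,a_3,a_2,a_1,1)$ with nonzero discriminant whose Jacobian has Mordell--Weil rank $\ge 2$ and in which $\alpha_{\ua_0},\beta_{\ua_0}$ are independent. For the certificate I would use the N\'eron--Tate canonical height pairing $\langle\,\cdot\,,\,\cdot\,\rangle$ on $\cJ_{\ua_0}(\Q)$, which is positive definite modulo torsion. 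The Gram matrix
\[
R=\begin{pmatrix} \hat h(\alpha_{\ua_0}) & \langle \alpha_{\ua_0},\beta_{\ua_0}\rangle \\ \langle \alpha_{\ua_0},\beta_{\ua_0}\rangle & \hat h(\beta_{\ua_0}) \end{pmatrix}
\]
is positive semidefinite, and $\det R>0$ forces $\alpha_{\ua_0},\beta_{\ua_0}$ to be $\Z$--linearly independent, since any dependence (even up to torsion) would make the two classes $\Q$--proportional modulo torsion and collapse $\det R$ to $0$. Because canonical heights on genus-$2$ Jacobians can be computed with rigorous error bounds (following the algorithms of Stoll and M\"uller), a value of $\det R$ provably bounded away from $0$ is a genuine proof rather than numerical evidence; the auxiliary bound $\rank\ge 2$ can be obtained by a $2$-descent or read off an LMFDB entry.

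The real content, and the main obstacle, is the search for a suitable witness: one must find a member of the family with \emph{both} leading and constant coefficient equal to $1$ whose Jacobian has rank $\ge 2$ and in which it is precisely $\alpha$ and $\beta$, rather than some unrelated pair of generators, that are independent. Note that the example \eqref{eq:tengely-curve} does not serve here, since it has $a_0=28$, so $(0,-1)$ is not even a point on it, and no rational rescaling can fix both $a_6$ and $a_0$ to $1$. This is a finite search, carried out with the scripts of \cite{OnlineComplement}; once a single $\ua_0$ with $\det R>0$ is located, the specialization principle of the first paragraph completes the argument.
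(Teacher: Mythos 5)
Your overall strategy coincides with the paper's: both arguments reduce $\Z$--linear independence of the universal sections to independence of their specializations at a single rational point $\ua_0\in\cU^{\infty}_{1,1}(\Q)$, via the evaluation homomorphism $\cJ(\cU^{\infty}_{1,1})\to\cJ_{\ua_0}(\Q)$. Your remark about the two sheets of the cover (harmless, since swapping sheets only replaces $\alpha$ by $-\alpha$), your observation that the curve \eqref{eq:tengely-curve} cannot serve as a witness because $a_0=28$, and your proposed certificate --- positive definiteness of the N\'eron--Tate pairing modulo torsion, so that $\det R>0$ forces independence --- are all correct; the certificate is a valid alternative to what the paper does.

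However, there is a genuine gap: you never exhibit the witness $\ua_0$. As you yourself write, finding a member of the family with $a_6=a_0=1$ on which it is precisely $\alpha$ and $\beta$ that are independent is ``the real content, and the main obstacle,'' and your proof defers exactly that content to an unperformed search; as written it establishes only the conditional statement ``if such a witness exists, then the proposition holds.'' The paper closes this gap with the explicit specialization $\ua_0=(1,8,10,10,5,2,1)$, i.e.\ the curve $C_0: y^2=x^6+8x^5+10x^4+10x^3+5x^2+2x+1$ (LMFDB 15625.a.15625.1), whose Mordell--Weil group is $\Z^2$ with trivial torsion and listed generators $G_1,G_2$; one then checks directly that $\beta_{\ua_0}=G_2$ and $\alpha_{\ua_0}=G_1-2G_2$, so $(\alpha_{\ua_0},\beta_{\ua_0})$ is itself a basis and no height computation is needed. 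Until you supply such a concrete curve (certified by your Gram-matrix criterion or otherwise), the proposal remains an outline rather than a proof.
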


\begin{proof}
Consider the specialization at
\[
C_{0}:\quad y^2=x^6+8x^5+10x^4+10x^3+5x^2+2x+1,
\]
corresponding to $\ua_0=(1,8,10,10,5,2,1)\in \cU^{\infty}_{1,1}(\Z)$.
The LMFDB entry for $C_0$ (see~\href{https://www.lmfdb.org/Genus2Curve/Q/15625/a/15625/1}{LMFDB:15625.a.15625.1}) states that
$\Jac(C_0)(\Q)\simeq \Z^2$ and has trivial torsion, and it lists generators on the (simplified)
model as
\[
G_1=2(0\!:\!-1\!:\!1)-(1\!:\!-1\!:\!0)-(1\!:\!1\!:\!0),
\qquad
G_2=(0\!:\!-1\!:\!1)-(1\!:\!1\!:\!0)
\]
(see \cite{LMFDB}).
With our notation $\infty_{+}=(1\!:\!1\!:\!0)$, $\infty_{-}=(1\!:\!-1\!:\!0)$ and
$P=(0\!:\!-1\!:\!1)$, we have $G_2=\beta_{\ua_0}$ and
\[
G_1-2G_2=(1\!:\!1\!:\!0)-(1\!:\!-1\!:\!0)=\alpha_{\ua_0}.
\]
Hence $\alpha_{\ua_0}$ and $\beta_{\ua_0}$ generate a rank-$2$ subgroup of $\Jac(C_0)(\Q)$,
so in particular they are $\Z$--linearly independent.
If there were a nontrivial relation $m\alpha_{\mathrm{univ}}+n\beta_{\mathrm{univ}}=0$ in
$\cJ(\cU^{\infty}_{1,1})$, it would specialize to the same relation at $\ua_0$, a contradiction.
\end{proof}

On this locus we have that $u^2=4$ and $\cU^\infty_{1,1}$ is the disjoint union
of the two components $u=2$ and $u=-2$. To avoid introducing new notation, for the next two results, we apply the argument on each
component and take the union of the two resulting thin exceptional sets.

\begin{proposition}\label{prop:rank2-thin}
Let $\Gamma\subset \cJ(\cU^{\infty}_{1,1})$ be the subgroup generated by
$\alpha_{\mathrm{univ}}|_{\cU^{\infty}_{1,1}}$ and $\beta_{\mathrm{univ}}$.
There exists a thin subset $\Omega\subset \cU^{\infty}_{1,1}(\Q)$ (in the sense of
\cite{SerreMW}*{\S9.1}) such that for every $\ua\in \cU^{\infty}_{1,1}(\Q)\setminus \Omega$,
the specialization map
\[
\mathrm{sp}_{\ua}:\Gamma\longrightarrow \Jac(C_{\ua})(\Q)
\]
is injective. In particular, for such $\ua$ the points $\alpha_{\ua}$ and $\beta_{\ua}$ are
$\Z$--linearly independent and
\[
\rank\Jac(C_{\ua})(\Q)\ \ge\ 2.
\]
\end{proposition}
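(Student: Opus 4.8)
The plan is to invoke Silverman's specialization theorem for abelian schemes over a base of dimension one, reducing to that setting via a pencil. The setup is as follows: Proposition~\ref{prop:rank2-independence-generic} establishes that $\alpha_{\mathrm{univ}}|_{\cU^{\infty}_{1,1}}$ and $\beta_{\mathrm{univ}}$ are $\Z$--linearly independent in $\cJ(\cU^{\infty}_{1,1})$, so the subgroup $\Gamma$ they generate is free of rank $2$. What I want is a thin set $\Omega$ outside of which the specialization map $\mathrm{sp}_{\ua}$ is injective on $\Gamma$; injectivity on $\Gamma$ forces the two images $\alpha_{\ua},\beta_{\ua}$ to remain $\Z$--linearly independent, whence $\rank\Jac(C_{\ua})(\Q)\ge 2$.

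First I would recall the statement of Silverman's specialization theorem (see \cite{SerreMW}*{\S11.1} or Silverman's original result): for an abelian scheme $\cA\to B$ over a smooth irreducible base $B$ of dimension one over a number field, and any finitely generated subgroup $\Gamma\subset \cA(B)$, the set of points $b\in B(\Q)$ where the specialization $\mathrm{sp}_b:\Gamma\to \cA_b(\Q)$ fails to be injective is finite (more precisely, of bounded height). The base $\cU^{\infty}_{1,1}$ here is multidimensional (it has coordinates $a_5,a_4,a_3,a_2,a_1$), so the main technical maneuver is to organize the exceptional locus across a family of one-parameter slices. The standard device, used in exactly this context by Serre, is Néron's specialization theorem together with a fibering argument: one covers $\cU^{\infty}_{1,1}$ by a family of rational curves (lines, or more generally a dominant family of one-parameter subfamilies), applies the one-dimensional specialization theorem on each, and checks that the union of the resulting finite exceptional sets, as the curve varies, forms a thin set in the sense of \cite{SerreMW}*{\S9.1}. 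This is precisely the content of the multidimensional specialization results collected in \cite{SerreMW}*{\S11.1}, which assert that for a finitely generated group of sections of an abelian scheme over a variety, the non-injective locus is thin.

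Concretely, I would argue as follows. Since $\Gamma$ is finitely generated and free of rank $2$ by Proposition~\ref{prop:rank2-independence-generic}, and since $\cJ\to \cU^{\infty}_{1,1}$ is an abelian scheme over an irreducible base that is smooth over $\Q$, the version of the specialization theorem for a general base \cite{SerreMW}*{\S11.1, Théorème} applies and yields a thin subset $\Omega\subset \cU^{\infty}_{1,1}(\Q)$ such that $\mathrm{sp}_{\ua}$ is injective on $\Gamma$ for all $\ua\in \cU^{\infty}_{1,1}(\Q)\setminus\Omega$. For the final sentence of the conclusion, I would note that injectivity of $\mathrm{sp}_{\ua}$ on $\Gamma$ sends the basis $\{\alpha_{\mathrm{univ}},\beta_{\mathrm{univ}}\}$ to a $\Z$--linearly independent pair $\{\alpha_{\ua},\beta_{\ua}\}$ in $\Jac(C_{\ua})(\Q)$: any relation $m\alpha_{\ua}+n\beta_{\ua}=0$ would be the image under $\mathrm{sp}_{\ua}$ of $m\alpha_{\mathrm{univ}}+n\beta_{\mathrm{univ}}\in\Gamma$, which by injectivity must be zero in $\Gamma$, forcing $m=n=0$ by the rank-$2$ freeness. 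Therefore $\rank\Jac(C_{\ua})(\Q)\ge 2$.

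The main obstacle I anticipate is not the abstract existence of the thin exceptional set, which is a black-box application once the hypotheses are verified, but rather confirming that the hypotheses genuinely hold in the multidimensional setting: namely that $\cU^{\infty}_{1,1}$ is irreducible (it is a double cover of an open subset of the affine space $\A^5$ cut out by the square-root condition on the leading coefficient, which here is $c=4a_6=4$, a constant square, so in fact the cover splits and $\cU^{\infty}_{1,1}$ is simply an open subvariety of $\A^5$ — this should be remarked on, since it simplifies the étale-cover bookkeeping), and that the relevant form of Silverman--Néron specialization is stated for higher-dimensional bases. If one prefers to avoid quoting a multidimensional version directly, the fibering-into-curves argument sketched above reduces everything to the classical one-variable theorem at the cost of a short lemma showing that a union of exceptional sets along a dominant family of curves is thin; I would include a one-line justification referencing \cite{SerreMW}*{\S9.2} for the thinness of such a union. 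Everything else—freeness of $\Gamma$, the implication from injectivity to linear independence—is immediate from the preceding proposition.
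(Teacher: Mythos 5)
Your proposal is correct and follows essentially the same route as the paper: the paper likewise passes to the generic fiber over the function field $K=\Q(a_1,\dots,a_5)$ (using, as you note, that $\cU^{\infty}_{1,1}$ is Zariski open in affine $5$--space since $c=4$ is a constant square), invokes the rank-$2$ independence from Proposition~\ref{prop:rank2-independence-generic}, and quotes N\'eron's specialization theorem for a higher-dimensional base as stated in \cite{SerreMW}*{\S11.1} to get the thin exceptional set. Your fallback fibering-into-curves argument is unnecessary, since the cited form of the theorem already handles multidimensional bases directly.
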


\begin{proof}

Since $\cU^{\infty}_{1,1}$ is a Zariski open subset of an affine $5$--space, its function field is
a purely transcendental extension $K=\Q(a_1,\dots,a_5)$.
Let $A/K$ be the generic fiber of the abelian scheme $\cJ\to \cU^{\infty}_{1,1}$.
By Proposition~\ref{prop:rank2-independence-generic}, the subgroup $\Gamma\subset A(K)$ has rank $2$.
N\'eron's specialization theorem (as stated in \cite{SerreMW}*{\S11.1}) asserts that the set of
$\ua\in \cU^{\infty}_{1,1}(\Q)$ for which the specialization homomorphism $A(K)\to \Jac(C_{\ua})(\Q)$
fails to be injective is thin. Taking $\Omega$ to be this thin set gives the claim.
\end{proof}

\begin{corollary}\label{cor:rank2-density-in-box}
Define
\[
\cU_{1,1}^\infty(\mathbb Z;X)
:=
\{a\in \cU_{1,1}^\infty(\mathbb Z): H(a)\le X\},
\qquad
H(a)=\max_{0\le i\le 6}|a_i|.
\]
Then there exists \(\gamma>0\) such that, for \(X\) sufficiently large,
\[
\#\{a\in \cU_{1,1}^\infty(\mathbb Z;X):
\operatorname{rank}\operatorname{Jac}(C_a)(\mathbb Q)<2\}
\ll X^{9/2}(\log X)^\gamma.
\]
Consequently,
\[
1-O\bigl(X^{-1/2}(\log X)^\gamma\bigr)
\le
\frac{
\#\{a\in \cU_{1,1}^\infty(\mathbb Z;X):
\operatorname{rank}\operatorname{Jac}(C_a)(\mathbb Q)\ge 2\}
}
{\#\cU_{1,1}^\infty(\mathbb Z;X)}
\le 1.
\]
\end{corollary}

\begin{proof}
Let $\Omega$ be as in Proposition~\ref{prop:rank2-thin}. For every
\(a\in \cU_{1,1}^\infty(\mathbb Q)\setminus\Omega\), the specialization map is
injective on the subgroup generated by \(\alpha_{\mathrm{univ}}\) and
\(\beta_{\mathrm{univ}}\). Hence \(\alpha_a\) and \(\beta_a\) are
\(\mathbb Z\)-linearly independent in \(\operatorname{Jac}(C_a)(\mathbb Q)\), and
therefore
\[
\operatorname{rank}\operatorname{Jac}(C_a)(\mathbb Q)\ge 2.
\]
Thus
\[
\{a\in \cU_{1,1}^\infty(\mathbb Z;X):
\operatorname{rank}\operatorname{Jac}(C_a)(\mathbb Q)<2\}
\subseteq
\Omega\cap[-X,X]^5.
\]
Since $\Omega$ is thin in $\A^5(\Q)$, Serre's
quantitative estimate for affine thin sets \cite{SerreMW}*{\S13.1, Theorem 1} (with $K=\Q$, $n=5$, $d=1$)
gives
\[
\#\bigl(\Omega\cap[-X,X]^5\bigr)\ \ll\ X^{9/2}(\log X)^{\gamma}.
\]
Moreover $\#\cU^{\infty}_{1,1}(\Z;X)\asymp X^5$ (the complement of $\cU^{\infty}_{1,1}$ is Zariski
closed of codimension $\ge 1$, hence contributes $O(X^4)$ points). The two components $u=\pm2$, mentioned before Proposition \ref{prop:rank2-thin} only change the leading constant. This proves the bound for
the rank smaller than \(2\) locus and hence the displayed lower bound for the proportion of Jacobians of rank greater than or equal to \(2\).
\end{proof}

\begin{corollary}[part (3) of Theorem \ref{th:1}]\label{cor:rank2-logdensity}
One has
\[
\liminf_{X\to\infty}
\frac{\log\bigl|\{(f,h)\in S_1(X): \rank\left(J_{(f,h)}(\Q) \right)\ge 2\}\bigr|}
{\log|S_1(X)|}
\ \ge\ \frac{5}{7}.
\]
\end{corollary}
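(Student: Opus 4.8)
The plan is to use Corollary~\ref{cor:rank2-density-in-box} to produce a large subfamily of rank~$\ge 2$ curves sitting inside $S_1(X)$, and then read off the logarithmic density directly.

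First I would observe that the slice $\cU^{\infty}_{1,1}$ embeds into the $h=0$ component of $S_1(X)$. A point $\ua\in \cU^{\infty}_{1,1}(\Z)$ has $h=0$, $a_6=1$, $a_0=1$, and arbitrary integers $a_1,\dots,a_5$. Since $h=0$ is one of the sixteen admissible choices with $h_i\in\{0,1\}$, and since $a_6=a_0=1\le X$ for every $X\ge 1$, each $\ua$ with $\Ht(\ua)\le X$ produces a pair $(f_{\ua},0)\in S_1(X)$. The height $\Ht(\ua)=\max_{0\le i\le 6}|a_i|$ that defines $\cU^{\infty}_{1,1}(\Z;X)$ is precisely the naive height constraint $|a_i|\le X$ that defines $S_1(X)$, since the coefficients of $h$ never contribute to the height; hence this inclusion respects the box at each level $X$. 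For these models one has $c=4a_6+h_3^2=4=2^2$, a square, so the two points at infinity are $\Q$-rational and $\alpha_{\mathrm{univ}}$ is the divisor used throughout.

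Next I would count. By Corollary~\ref{cor:rank2-density-in-box}, the number of $\ua\in\cU^{\infty}_{1,1}(\Z;X)$ with $\rank\Jac(C_{\ua})(\Q)<2$ is $\ll X^{9/2}(\log X)^{\gamma}$, whereas $\#\cU^{\infty}_{1,1}(\Z;X)\asymp X^5$; subtracting, the number of $\ua$ in this slice with $\rank\ge 2$ is $\asymp X^5$. The injective map $\ua\mapsto(f_{\ua},0)$ sends each such $\ua$ to a distinct pair in $S_1(X)$ with $\rank(\cJ_{(f,h)}(\Q))\ge 2$, so
\[
\bigl|\{(f,h)\in S_1(X):\rank(\cJ_{(f,h)}(\Q))\ge 2\}\bigr|\ \gg\ X^{5}.
\]

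Finally, combining this with $|S_1(X)|\asymp X^7$ (the sixteen copies of $\cC_1(X)$, each of size $\asymp X^7$) yields
\[
\frac{\log\bigl|\{(f,h)\in S_1(X):\rank\ge 2\}\bigr|}{\log|S_1(X)|}\ \ge\ \frac{5\log X+O(1)}{7\log X+O(1)}\ \xrightarrow[X\to\infty]{}\ \frac{5}{7},
\]
which gives the stated lower bound on the $\liminf$. There is no deep obstacle here: the arithmetic heart --- the thinness of the exceptional locus and the near-full density of rank~$\ge 2$ curves in the five-dimensional slice --- is already supplied by Corollary~\ref{cor:rank2-density-in-box}. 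The only point demanding care is the bookkeeping of the first paragraph, namely checking that the slice $\cU^{\infty}_{1,1}$ genuinely lands in $S_1(X)$ for the \emph{same} $X$ and that the two height functions coincide on it; once this is verified, the density computation is immediate.
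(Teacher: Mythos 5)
Your proof is correct and follows essentially the same route as the paper: embed the five-parameter slice $\cU^{\infty}_{1,1}(\Z;X)$ into $S_1(X)$, invoke Corollary~\ref{cor:rank2-density-in-box} to get $\asymp X^5$ curves of rank $\ge 2$, and compare with $|S_1(X)|\asymp X^7$. The extra bookkeeping you supply (the $h=0$ component, the agreement of height functions, and the observation that $c=4$ is a square) is exactly what the paper leaves implicit.
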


\begin{proof}
For $X\ge 1$, the family $\cU^{\infty}_{1,1}(\Z;X)$ parametrizes a subset of $\cC_1(X)$ of cardinality
$\asymp X^5$ (five freely varying coefficients, with $a_6=a_0=1$ fixed). By
Corollary~\ref{cor:rank2-density-in-box}, all but $O(X^{9/2}(\log X)^{\gamma})$ of these model curves have
Jacobian rank at least $2$. Since $|S_1(X)|\asymp X^7$, the stated logarithmic density follows.
\end{proof}

Corollaries \ref{cor:3} and \ref{cor:rank2-logdensity} show that, in an arithmetic ordering by height, imposing the condition that the
two points at infinity are $\Q$-rational produces a distinguished rational class
$\alpha_C=[\infty_+-\infty_-]\in \Jac(C)(\Q)$, which is non-torsion for all but a
negligible subset of models.

Moreover, Corollary \ref{cor:rank2-logdensity} gives an unconditional source of \emph{two} independent
rational classes on a large explicit subfamily: on the $5$-parameter family
$U_{\infty}^{1,1}(\Z;X)$ (of logarithmic density $\frac 57$ inside $C_1(X)$).

Completely analogously to $S_1^{\square}(X)$, write $S_2^{\square}(X,Y)$ for the subset of $S_2(X,Y)$ defined by the same conditions. For a fixed $Y \geq 1$, one has $\#S_2^{\square}(X,Y)\asymp_Y X^{13/2}$ when $X \to \infty$. Then, we have the following corollary.

\begin{corollary}\label{prop:box-torsion-booker-S2}
For any fixed positive integer $Y$, when $X \to \infty$ we have
\[
1-O_Y\bigl(X^{-1/2}(\log X)^6\bigr)
\le
\frac{\#\{(f,h)\in S_2^\square(X,Y):
\operatorname{rank}J_{(f,h)}(\mathbb Q)\ge 1\}}
{\#S_2^\square(X,Y)}
\le 1.
\]
\end{corollary}

\begin{remark}\label{rem:higher-genus}
The arguments of this subsection extend to hyperelliptic curves of genus $g\ge 1$:
imposing a condition that guarantees the existence of two points at infinity produces a subfamily
of size $\asymp X^{2g+2}\sqrt{X}$ inside the box of size $\asymp X^{2g+3}$, so one
obtains rank $r\ge 1$ for a set of logarithmic density at least
$\frac{4g+5}{4g+6}$. For rank $r\ge 2$ one would need a second section and to prove
it is generically $\Z$-independent from $[\infty_+-\infty_-]$; without an explicit
specialization witnessing independence, proving this would require other ideas in higher genus.
\end{remark}

\subsection{A lower bound for the number of genus-2 curves with split Jacobian and rank at least 2}\label{ssec:split}

In this subsection we give an explicit construction of {\it many} models for genus-$2$ curves whose Jacobians have Mordell--Weil rank at least $2$. The family we consider here has split Jacobian, so the rank computation reduces to a rank computation on the two elliptic curves factors. We follow the construction of Gajovi\'c--Park in  \cite{Gajovic2025}.

\begin{proposition}\label{prop:split} 
Let $
\begin{array}{l}
\mathcal{N}_{2}(X)=\left\{f \in \cC_1(X) \mid   \,  \rank \Jac\left(C_f\right) \geq 2\right\}
\end{array}
$. Then
$$
|\mathcal{N}_{2}(X)| \gg \frac{X^{2 / 3}}{(\log X)^2}.
$$  
\end{proposition}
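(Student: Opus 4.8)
The plan is to realize the curves counted by $\mathcal{N}_2(X)$ as genus-$2$ curves obtained by gluing two elliptic curves of rank $\ge 1$ along their full rational $2$-torsion, following \cite{Gajovic2025}, and then to count the resulting integral sextic models of bounded height. First I would recall the Gajović--Park construction: starting from a pair $(E_1,E_2)$ of elliptic curves with full rational $2$-torsion satisfying the compatibility needed to glue, one produces an explicit genus-$2$ curve $C_f:y^2=f(x)$ with $\Jac(C_f)$ isogenous over $\Q$ to $E_1\times E_2$, where the coefficients of the sextic $f$ are explicit polynomial expressions in the data defining $E_1$ and $E_2$. Because the Jacobian splits up to isogeny, one has $\rank\Jac(C_f)(\Q)=\rank E_1(\Q)+\rank E_2(\Q)$, so it suffices to glue two curves each of rank $\ge 1$ in order to land in $\mathcal{N}_2(X)$.

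The combinatorial heart, as flagged in the introduction, is to count \emph{suitable pairs} of full-$2$-torsion elliptic curves of rank $\ge 1$. I would fix an explicit one-parameter subfamily $\{E_s\}$ of curves $y^2=x(x-\cdot)(x-\cdot)$ with full rational $2$-torsion carrying a rational point that forces $\rank E_s(\Q)\ge 1$ by construction (so that no analytic or BSD input is needed), and restrict the parameter $s$ to a sieved set—e.g.\ primes, or squarefree integers in a fixed residue class—on which one can both guarantee the built-in point is non-torsion and control isomorphism collisions. The prime number theorem then furnishes $\gg T/\log T$ admissible values of $s$ with $|s|\le T$, which is the source of the two logarithmic factors appearing in the statement.

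Next I would carry out the height bookkeeping: since the coefficients of $f$ are polynomials of bounded degree (degree $3$, in the normalization producing the clean exponent) in the parameters of $E_1$ and $E_2$, taking both parameters $\le T$ forces $H_1(C_f)\ll T^{3}$, so the choice $T\asymp X^{1/3}$ keeps $f\in\cC_1(X)$ after clearing denominators. The number of admissible pairs is then $\gg (T/\log T)^2\asymp X^{2/3}/(\log X)^2$. To finish I would verify that distinct admissible pairs yield distinct sextics up to bounded multiplicity: the gluing data (the choice of $2$-torsion identification) and the $S_6$-symmetry of the six roots contribute only an $O(1)$ overcount, while the non-smooth locus $\Delta(f,0)=0$ is cut out by a single nonzero polynomial and hence removes only a lower-order set of pairs, so the lower bound $|\mathcal{N}_2(X)|\gg X^{2/3}/(\log X)^2$ survives.

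The main obstacle is the second step: producing $\gg T/\log T$ genuinely distinct full-$2$-torsion elliptic curves of rank $\ge 1$ with parameter $\le T$, while \emph{simultaneously} proving that the explicit section has infinite order, that distinct admissible parameters give non-isomorphic curves, and that the height stays polynomially controlled. The positivity of rank must come from a visible rational point rather than any conjecture, the $1/\log$ densities must be honest unconditional lower bounds (hence the restriction to a sieved parameter set), and the passage from pairs $(E_1,E_2)$ to sextics $f$ must be shown to be finite-to-one with $O(1)$ fibers; granting these, the remaining height estimates and the splitting of the Jacobian are routine.
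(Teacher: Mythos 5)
Your approach is genuinely different from the paper's, and it contains a gap that is fatal to the stated exponent. The paper does not glue two elliptic curves along their rational $2$-torsion; it uses the explicit two-parameter family $C_{d,m}\colon y^2=d^3x^6+m^3$ of Gajovi\'c--Park \cite{Gajovic2025}, whose Jacobian splits as $E_d\times E_m$ (with $E_d\colon y^2=x^3+d^3$, $E_m\colon y^2=x^3+m^3$) via the two bielliptic quotient maps of a sextic of the form $ax^6+b$ (Lemma~\ref{lemma1}); rank $1$ on each factor is supplied not by a visible point but by Frey's theorem (Lemma~\ref{lemma2}): $\rank E_p(\Q)=1$ for primes $p\equiv 3\pmod 4$. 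Taking prime pairs $(d,m)$ with $d,m\le X^{1/3}$, $d\equiv m\equiv 3\pmod 4$, the models $d^3x^6+m^3$ lie in $\cC_1(X)$ and are pairwise distinct, and the prime number theorem in arithmetic progressions gives $\gg X^{2/3}/(\log X)^2$ of them. Note also that the curves $E_d$ have only \emph{one} rational $2$-torsion point, so the paper's family does not even fit into the full-$2$-torsion gluing framework you propose.

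The concrete gap in your proposal is the height bookkeeping, i.e.\ the claim that the coefficients of the glued sextic have degree $3$ in the parameters of $(E_1,E_2)$. For the Howe--Lepr\'evost--Poonen construction (Proposition~\ref{prop:gluing}) this is false: $\glue(f,g)$ is weighted-homogeneous of degree $21$ in the roots, exactly as recorded in the paper's Lemma~\ref{gluelemma}, where $\glue\bigl(f^{(d)},g^{(d)}\bigr)=d^{21}\glue(f,g)$. Hence if the parameters (equivalently the roots) of $E_1,E_2$ range up to $T$, the resulting integral models have height $\asymp T^{21}$, forcing $T\asymp X^{1/21}$ and yielding only $\gg X^{2/21}/(\log X)^2$ curves --- far short of $X^{2/3}/(\log X)^2$. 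The reason the paper attains degree $3$ is structural: $C_{d,m}$ is a family of \emph{twists} of the single curve $y^2=x^6+1$, so the moduli are constant and only the twist data $(d,m)$ varies, keeping the model height at $\max(d^3,m^3)$; in your setup the moduli of $E_1,E_2$ genuinely move, and no change of model can bring the height of the glued curve down to degree $3$ in the parameters. The other steps you flag (non-torsion of the visible section via specialization, finite-to-one fibers of the gluing) could plausibly be carried out, but they cannot rescue the exponent: with the gluing construction the bound $X^{2/3}/(\log X)^2$ is out of reach, and one must instead work inside a twist family such as the paper's.
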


We prove the proposition using two auxiliary inputs recorded in Gajovi\'c--Park. For
squarefree integers \(d,m\ne 0\), let
\[
C_{d,m}:\ y^2=d^3x^6+m^3,\qquad
E_d:\ y^2=x^3+d^3,\qquad E_m:\ y^2=x^3+m^3.
\]
The split-Jacobian construction in \cite{Gajovic2025}*{Section 2.2} gives
\(\Jac(C_{d,m})\sim_{\mathbb Q}E_d\times E_m\), hence
\[
\rank\Jac(C_{d,m})(\mathbb Q)=\rank E_d(\mathbb Q)+\rank E_m(\mathbb Q).
\]
We also use the needed case of Frey's rank computation, as cited in
\cite{Gajovic2025}*{proof of Theorem 2.5}: if \(p>3\) is prime,
\(p\equiv3\pmod 4\), then \(\rank E_p(\mathbb Q)=1\).

\begin{proof}[Proof of Proposition \ref{prop:split} ] Fix $X \geq 1$. Consider all pairs of primes $(d, m)$ with

$$
d \leq X^{1 / 3}, \quad m \leq X^{1 / 3}, \quad d \equiv m \equiv 3 \quad(\bmod 4), \quad d, m>3 .
$$

For each such pair, define the polynomial $f_{d, m}(x)=d^3 x^6+m^3$. By construction,  $\left|f_{d, m}\right|=\max \left\{\left|d^3\right|,\left|m^3\right|\right\} \leq X$ so $C_{d,m}$ is a smooth hyperelliptic curve lying in $\cC_1(X)$.

By the cited isogeny and Frey's rank computation, both \(E_d\) and \(E_m\)
have rank \(1\), and hence \(\rank \Jac(C_{d,m})(\mathbb Q)=2\).

Using the Prime Number Theorem for primes in arithmetic progressions, we have
$$\left|\left\{p \leq X^{1 / 3}: p\; \text{prime}, \, p \equiv 3 \bmod 4\right\}\right| \sim \frac{3}{2} \frac{X^{1 / 3}}{\log X}$$ 
hence the number of pairs $(d,m)$ as above is asymptotically $\frac{9}{4} \frac{X^{2 / 3}}{(\log X)^2}$. 
\end{proof}

\subsection{Finding high rank twists of genus-2 curves with split Jacobian}\label{ssec:twist of split curve}

The aim of this section is to explain how to construct twists of genus-$2$ curves with split Jacobians whose Mordell–-Weil rank is high. We will follow the presentation of \cite{HoweLeprevostPoonen2000}, recalling the main constructions and results relevant for our purposes and adapting them to our specific setting.

Let $K$ be a field of characteristic zero, and let $\bar K$ be an algebraic closure of $K$. 
Let $F$ and $G$ be elliptic curves over $K$ given by the equations $y^{2}=f(x)$ and 
$y^{2}=g(x)$, respectively, where $f$ and $g$ are separable monic cubic polynomials in $K[x]$, 
with discriminants $\disc(f)$ and $\disc(g)$. 
Suppose that
\[
\psi \colon F[2](\bar K)\longrightarrow G[2](\bar K)
\]
is an isomorphism of Galois modules that does not arise from an isomorphism 
$F_{\bar K}\to G_{\bar K}$. In the situation of Proposition~\ref{prop:gluing}, since both curves have full $K$-rational $2$-torsion, the isomorphism $\psi$ is given by a bijection between the sets $\{(\alpha_i,0)\}$ and $\{(\beta_i,0)\}$, identifying the $2$-torsion points corresponding to the roots of $f$ and $g$.

Then one can \emph{glue} $F$ and $G$ along $\psi$ as follows. 
Let $H\subset F\times G$ be the graph of $\psi$, viewed as a finite subgroup scheme. 
The quotient $(F\times G)/H$ is an abelian surface defined over $K$, and, as explained below, it is isomorphic to the Jacobian of a genus-$2$ curve defined over $K$.

A precise justification of this construction is given in  \cite{HoweLeprevostPoonen2000}*{Section 3.2}. 
More precisely, the quotient $(F\times G)/H$ carries a natural principal polarization induced by the product polarization on $F\times G$, and it follows from \cite{HoweLeprevostPoonen2000}*{Proposition 3 and Section 3.2} that this principally polarized abelian surface is isomorphic over $K$ to the Jacobian of a genus-$2$ curve.

\begin{proposition}[Prop.~4 in \cite{HoweLeprevostPoonen2000}]\label{prop:gluing}
Let $F\colon y^{2}=f(x)$ and $G\colon y^{2}=g(x)$ be elliptic curves over $K$ with full $K$-rational $2$-torsion.
Let $\alpha_1,\alpha_2,\alpha_3$ (resp. $\beta_1,\beta_2,\beta_3$) be the roots of $f$ (resp. $g$),
indexed cyclically modulo $3$. Define
\[
a_1=\sum_{i=1}^3 \frac{(\alpha_{i+2}-\alpha_{i+1})^2}{\beta_{i+2}-\beta_{i+1}}, \qquad
b_1=\sum_{i=1}^3 \frac{(\beta_{i+2}-\beta_{i+1})^2}{\alpha_{i+2}-\alpha_{i+1}},
\]
\[
a_2=\sum_{i=1}^3 \alpha_i(\beta_{i+2}-\beta_{i+1}), \qquad
b_2=\sum_{i=1}^3 \beta_i(\alpha_{i+2}-\alpha_{i+1}),
\]
and set
\[
A=\disc(g)\frac{a_1}{a_2}, \qquad
B=\disc(f)\frac{b_1}{b_2}.
\]
Define
\[
\glue(f,g)
=
-\prod_{i=1}^3
\bigl(
A(\alpha_{i+1}-\alpha_i)(\alpha_i-\alpha_{i-1})x^2
+
B(\beta_{i+1}-\beta_i)(\beta_i-\beta_{i+2})
\bigr),
\]
where all indices are taken modulo $3$. Then $\glue(f,g)$ is a separable sextic polynomial in $K[x]$.
Let $C\colon y^2=\glue(f,g)(x)$. Let \(H\subset F\times G\) be the graph of the isomorphism
\[
(\alpha_i,0)\longmapsto (\beta_i,0), \qquad i=1,2,3.
\]
Then
\[
\Jac(C)\simeq_K (F\times G)/H.
\]
In particular,
\[
\Jac(C)\sim_K F\times G.
\]
\end{proposition}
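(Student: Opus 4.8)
The plan is to follow the explicit construction of Howe–Leprévost–Poonen, which realizes the gluing abstractly and then writes down a genus-$2$ model whose Jacobian is $(F \times G)/H$. Concretely, $H \subset F \times G$ is the graph of the Galois-module isomorphism $\psi \colon F[2] \to G[2]$, an étale subgroup scheme of order $4$ that is isotropic for the Weil pairing (because $\psi$ respects the pairing up to the standard sign convention, which is what lets the product polarization descend). Because $H$ is maximal isotropic in $(F \times G)[2]$, the product principal polarization descends to a principal polarization on the abelian surface $A := (F \times G)/H$; the hypothesis that $\psi$ does \emph{not} come from an isomorphism $F_{\bar K} \to G_{\bar K}$ is exactly what guarantees that $(A, \lambda)$ is indecomposable as a principally polarized abelian variety, so by the classical theorem of Weil (a principally polarized abelian surface is either a product of polarized elliptic curves or the Jacobian of a smooth genus-$2$ curve) we conclude $A \simeq_K \Jac(C)$ for a genus-$2$ curve $C/K$.

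The heart of the proposition is the \emph{explicit} sextic. First I would recall that since $F$ and $G$ have full rational $2$-torsion, the roots $\alpha_i$ and $\beta_i$ lie in $K$, so all the quantities $a_1,a_2,b_1,b_2,A,B$ and the displayed product $\glue(f,g)$ are manifestly in $K[x]$. The key computation, which I would cite directly from \cite{HoweLeprevostPoonen2000}*{Prop.~4} rather than reprove, is that the curve $C \colon y^2 = \glue(f,g)(x)$ has Jacobian isomorphic over $K$ to the glued surface $A$. In their setup one parametrizes the genus-$2$ curve by its six Weierstrass points and matches the induced $2$-torsion of $\Jac(C)$ — via the Richelot/level-$2$ structure — with the prescribed subgroup $H$; the formulas for $a_1, a_2, b_1, b_2$ are precisely the coefficients produced by this matching. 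Thus the identification $\Jac(C) \simeq_K F \times G$ is an equality of \emph{abelian surfaces} coming from $A \simeq F \times G$ as claimed (the splitting $F \times G$, not the quotient, because $H$ is the graph of an isomorphism, so the quotient is again isogenous — indeed here isomorphic after the construction — to the product; this is the content of their Proposition).

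Finally I would verify the two remaining assertions: separability of $\glue(f,g)$ and that it has degree exactly $6$. Separability follows because a repeated root would force the model $y^2 = \glue(f,g)$ to be singular, contradicting that its Jacobian is the abelian surface $A$ (smooth of dimension $2$); alternatively one checks $\disc(\glue(f,g)) \neq 0$ directly from the hypotheses $\disc(f)\disc(g) \neq 0$ and the nondegeneracy of $\psi$. The degree count is a direct inspection of the product: each of the three factors contributes an $x^2$ term with coefficient $A(\alpha_{i+1}-\alpha_i)(\alpha_i-\alpha_{i-1})$, and the nonvanishing of $A$ (equivalently $a_1 \neq 0$, which holds generically and is part of the standing nondegeneracy) ensures the leading coefficient is nonzero, giving degree $6$. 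The step I expect to be the genuine obstacle — and the reason I would lean on the cited reference rather than redo it — is establishing that the \emph{specific} algebraic formulas for $\glue(f,g)$ reproduce the Weierstrass model of the curve whose Jacobian is $A$; this is a substantial explicit computation in \cite{HoweLeprevostPoonen2000} relating the gluing data to the Igusa–Clebsch or Rosenhain invariants of $C$, and reproving it from scratch would not be illuminating here.
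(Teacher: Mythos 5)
The paper offers no proof of this proposition at all: it is quoted, with attribution, as Prop.~4 of \cite{HoweLeprevostPoonen2000}, and the abstract gluing picture (the graph $H$ of $\psi$ as a maximal isotropic subgroup of $(F\times G)[2]$, descent of the polarization to a principal one on the quotient, indecomposability when $\psi$ does not arise from an isomorphism, and Weil's dichotomy for principally polarized abelian surfaces) is exactly the background the paper recalls in the paragraph preceding the statement. So your decision to defer the verification of the explicit formulas to the cited reference, and to supply only this abstract sketch, matches what the paper does.

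There is, however, one genuine error in your write-up: the parenthetical claim that the quotient $(F\times G)/H$ is ``indeed here isomorphic after the construction'' to $F\times G$. This is false in general. What the gluing construction (and Prop.~4 of \cite{HoweLeprevostPoonen2000}) yields is $\Jac(C)\cong (F\times G)/\mathrm{graph}(\psi)$, which is $(2,2)$-isogenous to $F\times G$ but typically not isomorphic to it. As principally polarized abelian surfaces they can never be isomorphic, since the Jacobian of a smooth genus-$2$ curve is indecomposable while the product polarization is decomposable --- this is precisely why the hypothesis on $\psi$ matters. Even as unpolarized abelian varieties they generally differ: if $F$ and $G$ are non-isogenous, the images of $F$ and $G$ inside the quotient meet in a group of order $4$, whereas the only abelian subvarieties of $F\times G$ isogenous to $F$ and $G$ are $F\times\{0\}$ and $\{0\}\times G$, which meet trivially. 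Consequently the ``$\Jac(C)\simeq_K F\times G$'' in the statement must be read as a $K$-isogeny (which is all the paper ever uses, since its applications are rank identities, and rank is an isogeny invariant), not as an isomorphism; your attempt to upgrade the isogeny to an isomorphism is the one step of your argument that fails. A smaller point: your separability argument is circular (you invoke $\Jac(C)\cong A$, which presupposes that $C$ is smooth, i.e.\ that $\glue(f,g)$ is separable); nondegeneracy is handled in the reference by explicit conditions on $a_1,a_2,b_1,b_2$, not by this reasoning.
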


For any nonzero integer $d$, let $F^{(d)}$ (resp. $G^{(d)}$) denote the quadratic twist of $F$
(resp. $G$), given by the equation $y^{2}=f^{(d)}(x)$ (resp. $y^{2}=g^{(d)}(x)$), where
$f^{(d)}(x)=d^{3}f(x/d)$ (resp. $g^{(d)}(x)=d^{3}g(x/d)$).

\begin{lemma} \label{gluelemma}
Let $F\colon y^{2}=f(x)$ and $G\colon y^{2}=g(x)$ be elliptic curves over $K$ with full $2$-torsion,
and let $C\colon y^{2}=\glue(f,g)$. For any nonzero integer $d$, the curve
\[
C^{(d)}\colon dy^{2}=\glue(f,g)
\]
is isomorphic to the curve
\[
y^{2}=\glue\bigl(f^{(d)},g^{(d)}\bigr).
\]
\end{lemma}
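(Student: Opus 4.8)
The plan is to compute $\glue(f^{(d)},g^{(d)})$ explicitly and show that, as a polynomial in $x$, it differs from $\glue(f,g)$ only by a scalar of the shape (perfect square)$\,\times\, d$, which is precisely what is needed to identify the two models. The first step is to record the roots of the twisted cubics: since $f^{(d)}(x)=d^{3}f(x/d)$ is again monic, its roots are $\alpha_i'=d\alpha_i$, and likewise the roots of $g^{(d)}$ are $\beta_i'=d\beta_i$. Everything then reduces to tracking how the ingredients of the $\glue$ formula transform under $\alpha_i\mapsto d\alpha_i$, $\beta_i\mapsto d\beta_i$.

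The second step is the bookkeeping of scaling exponents. Straight from the definitions in Proposition~\ref{prop:gluing} one finds
\[
a_1'=d\,a_1,\qquad b_1'=d\,b_1,\qquad a_2'=d^{2}a_2,\qquad b_2'=d^{2}b_2,
\]
while for the discriminants of the monic cubics one has $\disc(f^{(d)})=d^{6}\disc(f)$ and $\disc(g^{(d)})=d^{6}\disc(g)$ (each of the three root differences contributes $d^{2}$). Substituting into $A=\disc(g)\,a_1/a_2$ and $B=\disc(f)\,b_1/b_2$ gives $A'=d^{5}A$ and $B'=d^{5}B$. Consequently each of the three factors
\[
A'(\alpha_{i+1}'-\alpha_i')(\alpha_i'-\alpha_{i-1}')x^{2}
+B'(\beta_{i+1}'-\beta_i')(\beta_i'-\beta_{i+2}')
\]
equals $d^{7}$ times the corresponding untwisted factor (a factor $d^{5}$ from $A',B'$ together with two factors of $d$ from the root differences). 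Multiplying the three factors and keeping the overall sign yields
\[
\glue\bigl(f^{(d)},g^{(d)}\bigr)(x)=d^{21}\,\glue(f,g)(x).
\]

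The final step is to reconcile the two curves. Writing $d^{21}=(d^{10})^{2}\,d$, the change of variables $y\mapsto d^{10}y$ transforms $y^{2}=\glue(f^{(d)},g^{(d)})$ into $y^{2}=d\,\glue(f,g)$; on the other hand, multiplying $C^{(d)}\colon dy^{2}=\glue(f,g)$ through by $d$ and setting $Y=dy$ gives $Y^{2}=d\,\glue(f,g)$. Both curves are therefore $K$-isomorphic to $y^{2}=d\,\glue(f,g)$, which proves the lemma.

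There is no genuine obstacle here beyond careful accounting: the argument is entirely elementary. The only point requiring attention is pinning down the exact exponent $d^{21}$ and observing that $d^{20}=(d^{10})^{2}$ is a perfect square, so that the identification is realized by a $K$-rational change of variables rather than merely over $K(\sqrt{d})$; this is what makes $y^{2}=\glue(f^{(d)},g^{(d)})$ genuinely $K$-isomorphic to the twist $C^{(d)}$.
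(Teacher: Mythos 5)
Your proof is correct and follows essentially the same route as the paper's: compute that the roots of $f^{(d)}$, $g^{(d)}$ are $d\alpha_i$, $d\beta_i$, track the scaling of the discriminants and of the $\glue$ ingredients to get $\glue\bigl(f^{(d)},g^{(d)}\bigr)=d^{21}\glue(f,g)$, and conclude by an explicit change of variables. The only cosmetic difference is that the paper performs the identification in one step via $y\mapsto d^{11}y$, whereas you pass through the intermediate model $y^{2}=d\,\glue(f,g)$; the composite of your two substitutions is exactly the paper's map.
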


\begin{proof}
A direct calculation shows that the roots of $f^{(d)}$ (resp. $g^{(d)}$) are $d\alpha_i$
(resp. $d\beta_i$). Moreover,
\[
\disc\bigl(f^{(d)}\bigr)=d^{6}\disc(f), \qquad
\disc\bigl(g^{(d)}\bigr)=d^{6}\disc(g),
\]
and
\[
\glue\bigl(f^{(d)},g^{(d)}\bigr)=d^{21}\glue(f,g).
\]
The change of variables $y\mapsto d^{11}y$ transforms the equation
$y^{2}=\glue\bigl(f^{(d)},g^{(d)}\bigr)$ into $dy^{2}=\glue(f,g)$, as claimed.
\end{proof}

The lemma suggests a manner to find high rank quadratic twists of a genus-$2$
curve \(C\) such that \(\Jac(C)\) is isogenous to \(F\times G\) for two elliptic
curves \(F\) and \(G\) with full rational \(2\)-torsion: one finds integers \(d\)
such that both \(F^{(d)}\) and \(G^{(d)}\) have high rank. Ralph Greenberg proposed a study in terms of Galois representations (see~\cite{Hatley2017} for a review and recent results).

Note that all elliptic curves $E:y^2=f(x)$ with full rational $2$-torsion have a quadratic twist $E^{(d)}:dy^2=f(x)$ which can be put in Legendre form $y^2=x(x-1)(x-\lambda)$, with $\lambda$ in its coefficient field. If $F$ and $G$ are two elliptic curves with full rational $2$-torsion and if there exists a twist $d_0$ such that both $F^{(d_0)}$ and $G^{(d_0)}$ are in Legendre form the following result (see~\cite{Alaa2017}*{Lemma 4.1}) asserts the existence of twists where both curves have positive rank.

\begin{proposition}
Assume the $abc$-conjecture. Let $F\colon y^{2}=f(x)$ and $G\colon y^{2}=g(x)$ be two non-isomorphic elliptic curves over $\Q$ such that
\[
f(x)=x(x-1)(x-\lambda_1), \qquad
g(x)=x(x-1)(x-\lambda_2),
\]
for some distinct $\lambda_1,\lambda_2 \in \Q$.
Let $C$ be the genus-$2$ curve given by the equation
$y^{2}=\glue(f,g).$
For each nonzero integer $d$, consider the quadratic twist $C^{(d)}$ of $C$
given by $dy^{2}=\glue(f,g).$
Then the set of squarefree integers $d$ such that
\[
\rank_{\Q}\Jac(C^{(d)})\ge 2
\]
has logarithmic density at least $\frac{1}{6}$.
\end{proposition}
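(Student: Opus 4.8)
The plan is to reduce the rank-$2$ statement for the twists $C^{(d)}$ to simultaneous positive rank of the two elliptic factors, and then to count. By Lemma~\ref{gluelemma} the twist $C^{(d)}\colon dy^2=\glue(f,g)$ is isomorphic over $\Q$ to $y^2=\glue(f^{(d)},g^{(d)})$, and $f^{(d)},g^{(d)}$ are again monic separable cubics with full rational $2$-torsion; so by Proposition~\ref{prop:gluing} the latter curve has Jacobian $\Jac(C^{(d)})\simeq_\Q F^{(d)}\times G^{(d)}$. Hence $\rank\Jac(C^{(d)})(\Q)=\rank F^{(d)}(\Q)+\rank G^{(d)}(\Q)$, and it suffices to exhibit $\gg X^{1/6+o(1)}$ squarefree $d\le X$ for which \emph{both} $F^{(d)}$ and $G^{(d)}$ have rank $\ge 1$.

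To produce such $d$ simultaneously for the two curves I would look for a single $x$-coordinate giving a rational point on both twists at once. Writing the twists as $F^{(d)}\colon dy^2=x(x-1)(x-\lambda_1)$ and $G^{(d)}\colon dy^2=x(x-1)(x-\lambda_2)$, a common non-two-torsion point with $x=t$ forces the squarefree parts of $t(t-1)(t-\lambda_1)$ and $t(t-1)(t-\lambda_2)$ to coincide, equivalently $(t-\lambda_1)(t-\lambda_2)$ to be a square. This is a smooth conic with a rational point at infinity, hence rational; solving $s^2=(t-\lambda_1)(t-\lambda_2)$ along lines of slope one gives the parametrisation $t=t(w)=\tfrac{w^{2}-\lambda_1\lambda_2}{2w-\lambda_1-\lambda_2}$ for $w\in\Q$. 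For each such $w$ one sets $d=d(w)$ equal to the squarefree part of $t(t-1)(t-\lambda_1)$; by construction both $F^{(d)}$ and $G^{(d)}$ then acquire a rational point of $x$-coordinate $t(w)$, which is precisely the mechanism of \cite{Alaa2017}*{Lemma 4.1}. Using the identity $t-\lambda_1=(w-\lambda_1)^2/(2w-\lambda_1-\lambda_2)$ one checks that $d(w)$ is the squarefree part of an explicit polynomial $S(w)\in\Z[w]$ of degree at most $6$. Finally the two points must be of infinite order: since their naive heights grow with $|w|$ while torsion on curves over $\Q$ is uniformly bounded, this fails for at most finitely many $w$, which I would discard.

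The counting step is then the following. As $w$ runs over the integers with $|S(w)|\le X$ one has $\gg X^{1/\deg S}\ge X^{1/6}$ admissible values, each furnishing a squarefree $d(w)\le X$ with $\rank\Jac(C^{(d(w))})(\Q)\ge 2$. To conclude a logarithmic density $\ge \tfrac16$ it remains to see that enough of the $d(w)$ are distinct. A coincidence $d(w_1)=d(w_2)$ forces $S(w_1)S(w_2)$ to be a perfect square, i.e.\ an integral point on the surface $z^2=S(w_1)S(w_2)$. Since we have a comfortable surplus of parameters — of order $X^{1/\deg S}$, whereas only $X^{1/6}$ distinct values are required — it suffices to prove a power-saving bound on the number of such collisions, for instance by fixing $w_1$ and bounding integral points on the genus-$2$ fibre $z^2=S(w_1)S(w_2)$, or via a determinant-method estimate on the surface itself.

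The hard part will be exactly this unconditional collision estimate. The clean route would be to argue that $S(w)$ is squarefree for a positive proportion of $w$, which would make the $d(w)$ automatically distinct and even yield the larger exponent $1/\deg S$; but squarefreeness of the values of a polynomial of degree $\ge 4$ is not known unconditionally. One is therefore forced to bound, uniformly in the family, how often two values share the same squarefree part, and it is this \emph{uniformity} — as opposed to the mere finiteness furnished by Siegel's theorem for each fixed fibre — that is the genuine obstacle and the reason the conservative exponent $\tfrac16$ is stated rather than the one suggested by $\deg S$.
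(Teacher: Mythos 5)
Your reduction of the statement to simultaneous positive rank of $F^{(d)}$ and $G^{(d)}$ via Lemma~\ref{gluelemma} and Proposition~\ref{prop:gluing}, and your conic parametrisation $s^2=(t-\lambda_1)(t-\lambda_2)$ producing a common $x$-coordinate on both twists, are exactly the mechanism behind \cite{Alaa2017}*{Lemma 4.1}, which is what the paper simply cites: the paper takes the explicit sextic $d(u)=(\lambda_1-\lambda_2)(u^2-1)\bigl(1-\lambda_2+(\lambda_1-1)u^2\bigr)\bigl(\lambda_1u^2-\lambda_2\bigr)$ from that lemma, for which both $F^{(d(u))}$ and $G^{(d(u))}$ have positive rank, and counts its values. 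So in substance your construction and the paper's coincide. However, your write-up then stalls at a step the paper never needs: you insist on replacing each polynomial value $S(w)$ by its \emph{squarefree part} before using it as the twisting integer, and you correctly observe that you cannot control collisions among squarefree parts unconditionally. By your own admission this ``collision estimate'' is open, so your proof as written is incomplete.

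That gap is self-inflicted. The proposition counts \emph{all} nonzero integers $d$, not squarefree ones, and for every integer $d$ the curve $C^{(d)}\colon dy^2=\glue(f,g)$ is isomorphic to the twist by the squarefree part of $d$, so the rank conclusion is insensitive to square factors. Hence you may take $d$ to be the polynomial value itself, $d=S(w)$ (or the paper's $d(u)$). A nonconstant polynomial of degree at most $6$ attains each integer value at most $6$ times, so the $\asymp X^{1/6}$ integers $w$ with $|S(w)|\le X$ already produce $\gg X^{1/6}$ \emph{distinct} integers $d\le X$ with $\rank\Jac(C^{(d)})\ge 2$; no squarefree-part extraction, and therefore no collision analysis, is needed. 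This one-line repair is precisely the paper's counting step. Separately, your non-torsion argument --- ``the naive heights grow with $|w|$ while torsion is uniformly bounded'' --- is not a proof as stated, since the curve $F^{(d(w))}$ varies with $w$ and heights of points must be compared against the varying curve; the paper sidesteps this entirely because \cite{Alaa2017}*{Lemma 4.1} asserts positive rank (not merely the existence of a rational point), and you should either cite it in that strength or supply the standard argument (for each of the uniformly bounded possible torsion orders, the torsion condition is a nontrivial algebraic condition on $w$, hence holds for only finitely many $w$).
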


\begin{proof}
Replacing $\lambda_i$ by an equivalent Legendre parameter if necessary,
we may assume that
\[
0<\lambda_1<\lambda_2<1.
\]
Set
\[
D(u)
=
(\lambda_1-\lambda_2)(u^2-1)
\bigl(1-\lambda_2+(\lambda_1-1)u^2\bigr)
\bigl(\lambda_1u^2-\lambda_2\bigr).
\]
Since the leading coefficient of $D(u)$ is
\[
(\lambda_1-\lambda_2)(\lambda_1-1)\lambda_1>0,
\]
we have $D(u)>0$ for all sufficiently large $u$.
By \cite{Alaa2017}*{Lemma 4.1}, the simultaneous twists
$F^{(D(u))}$ and $G^{(D(u))}$ have positive rank over $\Q(u)$. By the
Néron--Silverman specialization theorem
\cite{SilvermanAdvanced}*{Chapter III, Theorem 11.4}, for all but finitely many
$u_0\in\Q$, the specialized twists $F^{(D(u_0))}$ and $G^{(D(u_0))}$ have positive
rank over $\Q$.

For such a specialization, let $d$ be the squarefree representative of the class of
$D(u_0)$ in $\Q^\times/(\Q^\times)^2$. Then $F^{(D(u_0))}\simeq_\Q F^{(d)}$ and
$G^{(D(u_0))}\simeq_\Q G^{(d)}$. Moreover, by Lemma~\ref{gluelemma} and
Proposition~\ref{prop:gluing},
\[
\Jac(C^{(d)})\sim_\Q F^{(d)}\times G^{(d)}.
\]
Hence, since Mordell--Weil rank is invariant under isogeny,
\[
\rank_\Q\Jac(C^{(d)})
=
\rank_\Q F^{(d)}+\rank_\Q G^{(d)}
\ge 2.
\]

Since $D(u)$ is a squarefree polynomial of degree $6$, Theorem~3.5 of
\cite{Poonen2003}, whose proof relies on the $abc$-conjecture, implies that the
number of squarefree representatives in $\Q^\times/(\Q^\times)^2$ represented by the values
$D(u)$ with $1\le u\le B$ is $\gg B$.
Choose $a>0$ such that $D(u)\le X$ whenever
\[
1\le u\le aX^{1/6}
\]
and $X$ is sufficiently large. For each such value of $u$, let $d$ be the positive
squarefree representative of the class of $D(u)$ in $\Q^\times/(\Q^\times)^2$.
Then there exists a constant $C>0$, depending only on $\lambda_1$ and $\lambda_2$,
such that
\[
d\le C D(u).
\]
Hence $d\le CX$, and therefore the number of positive squarefree integers
$d\le X$ obtained in this way is $\gg X^{1/6}$.
Removing the finitely many exceptional specializations does not affect this lower bound.

Since $|D(X)|\asymp X$, it follows that
\[
\liminf_{X\to\infty}
\frac{
\log\bigl|\{d\in D(X):\rank_\Q\Jac(C^{(d)})\ge 2\}\bigr|
}{
\log |D(X)|
}
\ge \frac{1}{6}.
\]
This proves the claimed logarithmic density lower bound.
\end{proof}

We next describe the distribution of ranks in families of quadratic twists
of genus-$2$ Jacobians that are isogenous to the square of an elliptic curve admitting
a rational $3$-isogeny.

We fix a hyperelliptic model $C:y^2=h(x)$ over $K$. For $d\in K^\times$, we denote by
$C^{(d)}:dy^2=h(x)$ the quadratic twist relative to this model. Over $K(\sqrt d)$, the curves $C$ and $C^{(d)}$ are isomorphic via
$(x,y)\mapsto (x,\sqrt d\,y)$. The associated descent cocycle is given by the
hyperelliptic involution $(x,y)\mapsto(x,-y)$, which induces multiplication by
$-1$ on $\Jac(C)$. It follows that $\Jac(C^{(d)})$ is the quadratic twist of
$\Jac(C)$ by $[-1]$.

\begin{proposition}\label{square twist}
Let \(E/\mathbb Q\) be a semistable elliptic curve admitting a
\(\mathbb Q\)-rational \(3\)-isogeny, and let \(C/\mathbb Q\) be a genus-\(2\)
curve such that $
\Jac(C)\sim_{\mathbb Q} E^2$ .
With \(D(X)\) as in \eqref{eq:twist-DX}, one has
\[
\liminf_{X\to\infty}
\frac{
\#\{d\in D(X): \rank_{\mathbb Q}\Jac(C^{(d)})=2\}
}{
|D(X)|
}
>0 .
\]
\end{proposition}

\begin{proof}
Let $
\mathcal D(X)=
\{\Delta>0:\Delta<X,\ \Delta\text{ is a fundamental discriminant}\}$.
Since \(E\) is semistable and admits a \(\mathbb Q\)-rational \(3\)-isogeny,
\cite{KrizLi2019}*{Proposition 9.7} gives
\[
\#\{\Delta\in\mathcal D(X):
\ord_{s=1}L(E^{(\Delta)},s)=1\}\gg_E X .
\]
From the works of Gross--Zagier \cite{GrossZagier1986} and Kolyvagin \cite{Kol89} it follows that analytic rank \(1\) implies
Mordell--Weil rank \(1\),  hence
$
\#\{\Delta\in\mathcal D(X):
\rank_{\mathbb Q}E^{(\Delta)}=1\}\gg_E X $.

We now pass from positive fundamental discriminants to positive squarefree twist
parameters. For \(\Delta\in\mathcal D(X)\), define
\[
\operatorname{sf}(\Delta)=
\begin{cases}
\Delta, & \Delta\equiv 1 \pmod 4,\\
\Delta/4, & \Delta\equiv 0 \pmod 4.
\end{cases}
\]
Then \(\operatorname{sf}(\Delta)\in D(X)\), the map
\(\Delta\mapsto \operatorname{sf}(\Delta)\) is injective, and
$
\Delta/\operatorname{sf}(\Delta)\in\mathbb Q^{\times 2}$.
Therefore
$
E^{(\Delta)}\simeq_{\mathbb Q}E^{(\operatorname{sf}(\Delta))}$.
Moreover, it is known  (see for instance \cite{FouvryKlueners2007}*{page 467}) that
$
|D(X)|\sim \frac{6}{\pi^2}X$ and
$|\mathcal D(X)|\sim \frac{3}{\pi^2}X$,
so \(|\mathcal D(X)|/|D(X)|\to 1/2\). It follows that
\[
\#\{d\in D(X):\rank_{\mathbb Q}E^{(d)}=1\}\gg_E X,
\]
and hence this set has positive lower proportion inside \(D(X)\). For each such \(d\),
\[
\rank_{\mathbb Q}\Jac(C^{(d)})
=
2\,\rank_{\mathbb Q}E^{(d)}
=
2,
\]
and the conclusion follows.
\end{proof}

The next result describes the distribution of the Mordell–Weil ranks of Jacobians in the family of double quadratic twists of genus-$2$ curves with split Jacobian.

\begin{proposition} \label{split twist}

Let \(E_i:y^2=f_i(x)\), \(i=1,2\), be elliptic curves over
\(\mathbb Q\), where \(f_i\) are monic separable cubic polynomials split over
\(\mathbb Q\). Choose orderings of the roots of \(f_1\) and \(f_2\), and assume
that the resulting identification \(E_1[2]\simeq E_2[2]\) does not arise from an isomorphism $E_{1, \overline{\mathbb Q}} \simeq E_{2, \overline{\mathbb Q}}$. Let
\(C:y^2=\operatorname{glue}(f_1,f_2)\) be the corresponding genus-\(2\) curve.
Assume that the Birch and Swinnerton-Dyer conjecture holds for the quadratic
twist families of \(E_1\) and \(E_2\).

For nonzero squarefree integers \(d_1,d_2\), let
\(E_i^{(d_i)}:y^2=f_i^{(d_i)}(x)\), where
\(f_i^{(d_i)}(x)=d_i^3 f_i(x/d_i)\), and let
\(C^{(d_1,d_2)}\) be the genus-\(2\) curve obtained by applying the same gluing
construction to \(E_1^{(d_1)}\) and \(E_2^{(d_2)}\).

With \(D(X)\) as in \eqref{eq:twist-DX}, one has
\[
\lim_{X\to\infty}
\frac{\bigl|\{(d_1,d_2)\in D(X)^2 :
\rank\Jac(C^{(d_1,d_2)})=r\}\bigr|}
{|D(X)|^2}
=
\begin{cases}
\frac{1}{4}, & r=0,\\[4pt]
\frac{1}{2}, & r=1,\\[4pt]
\frac{1}{4}, & r=2,\\[4pt]
0,           & r\ge 3.
\end{cases}
\]
\end{proposition}

\begin{proof}
By construction, $E_1^{(d_1)}$ (resp.~$E_2^{(d_2)}$) is isomorphic to
$E_1$ (resp.~$E_2$) over $\Q(\sqrt{d_1})$ (resp.~$\Q(\sqrt{d_2})$). Over \(L=\mathbb Q(\sqrt{d_1},\sqrt{d_2})\), the maps
\[
(x,y)\mapsto (x/d_i,\; y/d_i^{3/2})
\]
identify \(E_i^{(d_i)}\) with \(E_i\) and carry the ordered set of nontrivial
\(2\)-torsion points of \(E_i^{(d_i)}\) to that of \(E_i\). Hence they identify
the graph used in the gluing of \(E_1^{(d_1)}\) and \(E_2^{(d_2)}\) with the
graph used in the gluing of \(E_1\) and \(E_2\). Therefore the corresponding
principally polarized quotients are isomorphic over \(L\), and  \(C^{(d_1,d_2)}\) and \(C\) are isomorphic over \(L\).

 It follows that
\[
\Jac(C^{(d_1,d_2)}) \sim_{\Q} E_1^{(d_1)} \times E_2^{(d_2)}.
\]
By \cite{Smith2025}*{Corollary 1.2}, under BSD, the quadratic twists of \(E_i\) have rank
\(0\) and \(1\) with proportions \(1/2\) each, while the twists of rank at least
\(2\) have proportion \(0\). Since
\[
\rank \Jac(C^{(d_1,d_2)}) =
\rank E_1^{(d_1)} + \rank E_2^{(d_2)},
\]
the claimed distribution of ranks follows.
\end{proof}

\section{Cryptographic motivation: Regev's quantum algorithm}
\label{sec:cryptomotivation}

In this section we develop the cryptographic discussion from
Section~\ref{ssec:crypto} and recall the features of Regev's DLP algorithm that
are used below. Let \((G,+)\) be the group in which the DLP is posed, for us an
elliptic curve group or the Jacobian of a genus-\(2\) curve over \(\F_q\), and
let \(h=[x]g\) be the target relation. Let \(n\) be the binary size of \(G\).
The step that dominates the gate complexity of Regev's algorithm requires
\(d+\frac{n}{d}\) additions and doublings in \(G\), while the gate complexity per
run of Shor's algorithm is \(n\) additions and doublings. Hence the asymptotic
advantage of Regev's algorithm with respect to Shor's algorithm is
\(\min(d,\frac{n}{d})\). When \(G\) is the multiplicative group of a finite
field, one can take \(d=\sqrt{n}(\log n)^{O(1)}\) (see~\cite{Pilatte2026}). For
elliptic curves and genus-\(2\) Jacobians, the largest value currently obtained
for \(d\) is \((\log n)^{\frac{1}{2}}\)
(see~\cite{BarbulescuBarcauPasol2025}). In curve applications, increasing \(d\)
amounts to finding suitable rational Mordell--Weil generators, as recalled next.

\subsection{Regev's algorithm in a nutshell}\label{ssec:Regev}

For the chosen parameter \(d\), the algorithm uses \(d-2\) auxiliary elements of
\(G\), denoted \(g_1,\ldots,g_{d-2}\), and sets \(g_{d-1}=h\) and \(g_d=g\).
It considers the lattice
 \begin{equation}\label{eq:L}
 L=\left\{(z_1,\ldots,z_d)\in \Z^d\mid \sum_{i=1}^d [z_i]g_i=0\right\}. 
 \end{equation}
One computes in superposition all the sums $\sum_{i=1}^d [z_i]g_i$, called multi-scalar product, where $z_1,\ldots,z_d$ are non-negative $\frac{n}{d}$-bit integers. Next, a quantum procedure computes a basis of the lattice~$L$. Finally, one solves a linear system to find a vector of $L$ of the form $(0,\ldots,0,-1,*)$. The last coordinate of this vector is $\log_gh$. 

To obtain the stated complexity, one needs a classical algorithm that computes
arbitrary multi-scalar products with coefficients of $\frac{n}{d}$ bits. This can
be done using Pippenger's algorithm~\cite{Pippenger1980}, provided one can store
a table of all multi-scalar products with coefficients in $\{0,1\}$. In the
quantum setting such large lookup tables are not available. To overcome this
obstacle, Regev~\cite{Regev2023} chooses the auxiliary elements
$g_1,\ldots,g_{d-2}$ so that they can be represented using few bits. 

As anticipated in Section~\ref{ssec:crypto}, the parameter $d$ is supplied, for
curves, by reducing Mordell--Weil generators from a rational lift. Let
$C_0/\F_q$ be the finite-field curve on which the DLP is posed, and let
$\widetilde C/\Q$ be a curve with good reduction at $q$ whose reduction is
isomorphic to $C_0$. This is the setting used
in~\cite{BarbulescuBarcauPasol2025}*{Section~3.3}
and~\cite{BarbulescuBisson2024}*{Section~5.1}. If
$J=\Jac(\widetilde C)$, good reduction gives a specialization homomorphism
$
\rho_q:J(\Q)\longrightarrow \Jac(C_0)(\F_q)
$. 
Choose points $P_1,\ldots,P_r\in J(\Q)$ which are independent modulo torsion,
and write $\overline P_i=\rho_q(P_i)$. These reduced points are used as the
auxiliary elements in the lattice construction, namely
\[
g_i=\overline P_i\quad (1\le i\le r), \quad g_{r+1}=h,\quad g_{r+2}=g,
\]
where $h$ is the DLP input and $g$ is the base point. Thus $d-2=r$, so a
rank-$r$ lift allows one to take
\[
d=r+2,
\]
provided that the reduced points satisfy the usual independence
heuristic which makes the lattice $L$ in~\eqref{eq:L} behave as in Assumption 1
of \cite{Ekera2024}. This heuristic fails in degenerate cases such as the one
described in~\cite{Ekera2024}*{Example preceding Assumption~1}, while the
numerical experiments in~\cite{BarbulescuBarcauPasol2025}*{Section~3.2} support
it when the chosen auxiliary elements behave independently.

Solving Problem~\ref{prob:draw} produces curves on which Regev's algorithm can
be run with a large parameter \(d\) (see Appendix~\ref{appendix}); these are
useful for experiments and comparisons with Shor's algorithm, but they do not by
themselves give a curve compatible with a fixed HECC instance.

For a fixed curve \(C/\F_q\), Problem~\ref{prob:twist} asks for a high-rank
rational lift, twist, or related curve whose reduction is compatible with \(C\).
If the twisting parameter \(d\), or the two parameters \((d_1,d_2)\), are squares
modulo \(q\), then the corresponding twist has reduction isomorphic to \(C\).
More generally, if \(\psi\) is an isogeny, in particular an isomorphism, between
two algebraic varieties of order \(|G|\), with kernel of order coprime to
\(|G|\), then
$$\log_gh=\log_{\psi(g)}\psi(h).$$
Thus the DLP can be transported through this map. In this sense, solving
Problem~\ref{prob:twist} for a rational lift \(\widetilde C\) of \(C\) supplies
a compatible curve on which Regev's algorithm can be run with a larger parameter.

\subsection{High rank twists of certain curves used in cryptography}\label{ssec:twist of split}

Problem~\ref{prob:twist} is solved on a list of HECC genus $1$ (resp. genus $2$) curves  in~\cite{BarbulescuBarcauPasol2025} (resp.~\cite{BarbulescuBisson2024}). The empirical observation which motivated this work is that a naive enumeration offers better results in the case of two types of genus-$2$ curves: CM curves of simple Jacobian and the curves which are twists of a split Jacobian. In the rest of this section we obtain a justification for this phenomenon.

\subsubsection{\emph{The curves $C_a:y^2=x^5+a$.}}\label{sssec:x5}
The Buhler--Koblitz curve 4GLV127-BK  (see~\cite{BarbulescuBisson2024}) has equation $y^2=x^5+17$
 and is a twist of $y^2+y=x^5$ (see~\cite{Buhler1998}). The curve is one of the most important in HECC, e.g. it was considered along the curve \emph{Generic1271} in~\cite{Bos2014}. It has CM and simple Jacobian (see \href{https://www.lmfdb.org/Genus2Curve/Q/3125/a/3125/1}{LMFDB:4096.b.65536.1}). One of its twists, the curve $y^2=x^5+8$, is used in~\cite{Brown2005}.
 
 Problem~\ref{prob:twist} was solved for $y^2=x^5+17$
in~\cite{BarbulescuBisson2024}: an early-abort search enumerated the twists
$y^2=x^5+17\delta$ for $1\leq \delta\leq 128\cdot 10^4$ in
$448\,960$ core-seconds, about $125$ core-hours, and found twists up to
rank~$7$. For the compatible finite-field instance used there, a rank-$6$
twist gives Regev parameter $8$ and an eightfold speedup over Shor; thus the
search is worthwhile as offline preprocessing in that example.

 In an experiment, we used a modified search which uses the particular properties of the simple CM curves. Among the  curves $y^2=x^5+a$ with $a\leq 10^5$ one has:
 \begin{itemize}
 \item $60794$ are squarefree (this is close to $\zeta(2)^{-1}$, the theoretical proportion of squarefree positive integers, see \cite{Graham1981} or \cite{FouvryKlueners2007}*{page 467});
  \item $3559$ of them have at least $3$ rational points of height $\leq 500$ on $C_a$ (this is implemented in Magma, see~\cite{Stoll2016}). Heuristically, when embedded in the Jacobian $J_a=\Jac(C_a)$ they are not torsion with high probability and we expect $\rank J_a\geq 3$. 
  \item $\geq1663$ of them have a root number of opposite parity with respect to the lower bound obtained above, so the rank is larger than the previous bound. The root number can be computed by a closed formula proven in~\cite{Bisatt2022}.
 \item For these curves one computes in Magma a basis of the Mordell--Weil
group. This is implemented in Magma~\cite{magma} V2.27-6 and
PARI/GP~\cite{parigp} version 2.18 only for genus $1$ and~$2$. We find
$15,11,2,2$ twists of ranks $5,6,7,8$, respectively. If compatible with a
finite-field target, a rank-$8$ example would give Regev parameter $10$; this
enumeration was not recorded as a timed benchmark. For comparison, the
similarly sized data set of~\cite{Booker2016} contains no genus-$2$ curve of
rank $\geq 5$.
  \end{itemize}
 
 \begin{remark}
 As an alternative to computing a basis of the Mordell--Weil group, one can compute the analytic rank. The method is interesting because the curves $C_a$ are CM and in the genus $1$ analogue, twists of high rank of the CM curve $y^2=x^3-x$ have been found by computing the analytic rank (see e.g.~\cite{Elkies2002}).    
 
 To extend their idea to genus 2, note that Stoll~\cite{Stoll2002} identified the Hecke character $\eta_a$ such that $L(J_a)=L(\eta_a)$ (see also~\cite{Stoll2003}). The computation of the Hecke grossen characters has an implementation in PARI/GP since version 2.15 using the algorithms in~\cite{Molin2022}. We are indebted to Aurel Page for the program that we used to compute the L-function of $J_a$ (see~\cite{OnlineComplement}). However, with the current implementation, the computation of the L-function of $J_a$ using the Hecke character is more than $20$ times slower than the general method to compute L-functions of genus 2 curves, which is implemented in PARI/GP since version 2.10. It is approximately $70$ times slower than the optimized implementation of Bill Allombert. Note however that, contrary to the elliptic curves analogue, in the case of genus 2 curves the Hecke grossen characters have relatively new implementations and the numerical results in this work were obtained without this alternative.
\end{remark}

\subsubsection{Curves whose Jacobians become isogenous to \(E\times E\).}

Freeman and Satoh defined two families of curves $C$ whose Jacobian is the square of an elliptic curve:
\begin{lemma}[Prop. 4.1 and 4.2 in \cite{Freeman2011}]\label{lem:FreemanSatoh}
Let $K$ be a perfect field and let $C$ be a hyperelliptic curve defined over $K$ which is defined by Equation~\eqref{eq:Cdeg5} (resp.~\eqref{eq:Cdeg6}).
\begin{eqnarray}
C:y^2&=&x^5+ax^3+bx \label{eq:Cdeg5}\\
C:y^2&=&x^6+ax^3+b.\label{eq:Cdeg6}
\end{eqnarray}
Let $m=4$ (resp. $m=3$).
 Set $c=a/\sqrt{b}\in \overline{K}$ and $\zeta_m$ a primitive $m$-th root of unity. Then $\Jac(C)$ is isogenous over $K(b^{\frac{1}{2m}},\zeta_m)$ to $E\times E$ where $E$ is defined by Equation~\eqref{eq:Edeg5} (resp.~\eqref{eq:Edeg6}):
\begin{eqnarray}
E(c):y^2&=&(c+2)x^3-(3c-10)x^2+(3c-10)x-(c+2)\label{eq:Edeg5}\\
E(c):y^2&=&(c+2)x^3-(3c-30)x^2+(3c+30)x-(c-2)   .\label{eq:Edeg6}
\end{eqnarray}
\end{lemma}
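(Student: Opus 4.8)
The plan is to exploit the extra automorphisms that these special curves carry, realize $\Jac(C)$ as isogenous to the square of a single elliptic quotient, and then identify that quotient explicitly with $E(c)$. Throughout I work over $L:=K(b^{1/(2m)},\zeta_m)$. In the sextic case \eqref{eq:Cdeg6} the curve admits the order-$3$ automorphism $\rho(x,y)=(\zeta_3 x,y)$, which preserves $x^6+ax^3+b$ since $x^3$ is fixed; in the quintic case \eqref{eq:Cdeg5} it admits $\tau(x,y)=(-x,\zeta_4 y)$, whose square $(x,y)\mapsto(x,-y)$ is the hyperelliptic involution. In both cases one checks on the basis $\{dx/y,\,x\,dx/y\}$ of $H^0(C,\Omega^1)$ that the automorphism acts with the two distinct primitive eigenvalues ($\zeta_3,\zeta_3^2$, resp. $\zeta_4,\zeta_4^{-1}$). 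For $m=4$, the hyperelliptic involution $\tau^2$ acts as $-1$ on $\Jac(C)$, so $\tau$ satisfies $\tau^2+1=0$ there; for $m=3$, the quotient $C/\langle\rho\rangle$ is rational (set $u=x^3$ to get $y^2=u^2+au+b$), so $\rho$ kills no nonzero fixed part and satisfies $1+\rho+\rho^2=0$. Either way one obtains an embedding $\Z[\zeta_m]\hookrightarrow\End_L(\Jac C)$ acting through both complex embeddings on the tangent space.

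Next I would introduce the reflection involution $\sigma$: for \eqref{eq:Cdeg6} one takes $\sigma(x,y)=(b^{1/3}/x,\,b^{1/2}y/x^3)$, and for \eqref{eq:Cdeg5} one takes $\sigma(x,y)=(b^{1/2}/x,\,b^{3/4}y/x^3)$. A short computation confirms each preserves the equation and is defined over $L$, the radicals of $b$ entering precisely through the $y$-part of $\sigma$. A Riemann--Hurwitz count on the two fixed points shows that the reflection quotient $E:=C/\sigma$ has genus $1$, so after fixing the image of a rational point it is an elliptic curve over $L$. Writing $f\colon C\to E$ for the quotient map, the pair $(f_*,(f\circ\rho)_*)$ (with $\tau$ in place of $\rho$ in the quintic case) defines a homomorphism $\Jac(C)\to E\times E$ over $L$. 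On cotangent spaces $f^*H^0(\Omega^1_E)$ is one eigenline of the order-$m$ automorphism while $(f\circ\rho)^*H^0(\Omega^1_E)$ is the other, so together they span $H^0(\Omega^1_C)$; hence the map is étale on tangent spaces and is an isogeny. Conceptually this is forced: an abelian surface carrying an action of the imaginary quadratic field $\Q(\zeta_m)$ through both embeddings on its tangent space is necessarily isogenous to $E\times E$, since by the Albert classification an imaginary quadratic field cannot occur in the endomorphism algebra of a simple abelian surface.

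It then remains to compute $E=C/\sigma$ in Weierstrass form and match it with \eqref{eq:Edeg6} (resp. \eqref{eq:Edeg5}). Concretely I would pass to the ring of $\sigma$-invariant functions, using coordinates such as $s=x+b^{1/3}/x$ (resp. $s=x+b^{1/2}/x$) together with a suitably normalized invariant built from $y$, write down the single relation they satisfy, and reduce it to a cubic; rewriting everything in terms of $c=a/\sqrt b$ should reproduce exactly the cubic in the stated model. \emph{This explicit reduction is the main obstacle}: choosing invariant coordinates that linearize the elimination of $x$ and $y$, and simplifying the resulting passage so that it matches the given cubic on the nose. A secondary point to watch is that the change of variables realizing $E\cong E(c)$, and hence the whole isogeny $\Jac(C)\to E(c)^2$, descends to $L=K(b^{1/(2m)},\zeta_m)$ rather than only to $\overline{K}$; this is exactly where the radical $b^{1/(2m)}$ (rationalizing $\sigma$ and the factor $\sqrt b$ hidden in $c$) and the root of unity $\zeta_m$ are needed. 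I would cross-check the final normalization against \cite{Freeman2011}*{Prop.~4.1 and 4.2}.
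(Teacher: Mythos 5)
First, a structural point: the paper gives no proof of this lemma at all --- it is imported verbatim from Propositions 4.1 and 4.2 of \cite{Freeman2011}, so the only meaningful comparison is with that source. Your skeleton is essentially the Freeman--Satoh construction: the order-$m$ automorphism ($\rho(x,y)=(\zeta_3x,y)$, resp.\ $\tau(x,y)=(-x,\zeta_4y)$), the involution $\sigma$ built from $x\mapsto b^{1/3}/x$ (resp.\ $x\mapsto b^{1/2}/x$), the genus computation for $C/\sigma$ via Riemann--Hurwitz, and the isogeny $\Jac(C)\to E\times E$ assembled from $f$ and $f\circ\rho$. All of those ingredients check out: your $\sigma$ does preserve the equations, is defined over $K(b^{1/(2m)},\zeta_m)$, has two fixed points generically, and the quotient has genus $1$.

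There are, however, two genuine errors in your justifications. (i) The differential argument is self-defeating as stated. Eigenlines of $\rho^*$ are $\rho^*$-stable, so if $f^*H^0(\Omega^1_E)$ were an eigenline of the order-$m$ automorphism, then $(f\circ\rho)^*H^0(\Omega^1_E)=\rho^*f^*H^0(\Omega^1_E)$ would be the \emph{same} line and the two would fail to span. In fact $f^*H^0(\Omega^1_E)$ is the $\sigma^*$-invariant line --- in the sextic case spanned by $dx/y-b^{-1/6}x\,dx/y$, while the eigenlines of $\rho^*$ are spanned by $dx/y$ and $x\,dx/y$ --- and it is precisely because this line is \emph{not} an eigenline (equivalently, because of the dihedral relation $\sigma\rho\sigma^{-1}=\rho^{-1}$, so the line is not $\rho^*$-stable) that the two pullback lines are distinct and span $H^0(\Omega^1_C)$. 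The conclusion survives, but only after reversing your claim. (ii) The ``conceptual'' backup via Albert's classification is false: imaginary quadratic fields \emph{do} occur in endomorphism algebras of simple abelian surfaces. Fake elliptic curves --- abelian surfaces with quaternionic multiplication by an indefinite division quaternion algebra $B/\Q$ --- are simple, and any imaginary quadratic field splitting $B$ (e.g.\ $\Q(i)$ inside the quaternion algebra ramified at $2$ and $3$) embeds into $\End^0$ and acts with signature $(1,1)$ on the tangent space. So a signature-$(1,1)$ action of $\Z[\zeta_m]$ does \emph{not} force a splitting; the existence of the elliptic quotient $C/\sigma$ is the essential input and cannot be replaced by a classification argument. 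Finally, the explicit identification of $C/\sigma$ with the stated models \eqref{eq:Edeg5}--\eqref{eq:Edeg6} --- which is the actual content the paper uses downstream (the curves $E(c)$ in its cryptographic examples) --- is acknowledged but not carried out; the invariant-coordinate method you propose does work, but until that elimination is done the lemma as stated, with its explicit Weierstrass models, is not proved.
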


\medskip

\paragraph{\emph{The curves $y^2=x^5+bx$.}}~

They occur in cryptography e.g.  $b=21$ in~\cite{Drylo2012}*{Ex 20} and $b=3$ in~\cite{Kawazoe2008}. Note that the curves are twists over $\Q(b^{\frac{1}{5}})$ of $y^2=x^5+x$, which is of the form~\eqref{eq:Cdeg5} with $a=0$ and $b=1$. By Lemma~\ref{lem:FreemanSatoh} we have
$$\Jac(C_1)\sim_{\mathbb Q(i)} E\times E 
\text{ with }
E:y^2= x^3+10x^2-20x-8.
$$
(see \href{https://www.lmfdb.org/EllipticCurve/Q/256/a/2}{LMFDB:256.a2}). We found the twist $d=110814$ of rank~$4$.\medskip

\paragraph{\emph{Drylo's pairing-friendly curves with
split Jacobian}}~ 
Drylo~\cite{Drylo2012} proposed a list of curves as in the statement of Lemma~\ref{lem:FreemanSatoh} with $K=\Q$. A priori, the parameter $b$ is not required to be a square, but the second part of Example~19 in that article is a rational square. 

Indeed, Drylo considered the $1/10$-twist of $C:y^2=x^6+ax^3+b$ with $a=4/25$ and $b=(8/125)^2$; here $c=\frac{a}{\sqrt{b}}\in \Q$. By Lemma~\ref{lem:FreemanSatoh}, $\Jac(C)\sim_{\Q(\zeta_3)} E\times E$ where $E=E(\frac{5}{2})$ as in Equation~\eqref{eq:Cdeg6}. The quadratic twist of $d=1046$ of the curve $y^2 = x^3 + \frac{5}{2}x^2 - \frac{45}{4}x - \frac{81}{4}$ has rank $4$, so the corresponding twist of $C$ has rank $8$ over $\Q(\zeta_3)$. 

Thus, in both split examples, a rank-$4$ elliptic twist gives genus-$2$
rank $8$ over the splitting field, hence Regev parameter $10$ and
dominant-operation gain $n/(10+n/10)$; these twists are examples rather than
timed search benchmarks.

\appendix

\section{List of hyperelliptic curves suited for an implementation of Regev's algorithm}\label{appendix}

An implementation of Regev's algorithm for hyperelliptic curves benefits from
explicit curves for which the parameter $d$ can be taken as large as possible.
We collect a short list of small-coefficient, high-rank examples for
Problem~\ref{prob:draw}; they are not evidence for Theorem~\ref{th:1}, but
inputs for experiments with the Regev-style algorithm discussed in
Section~\ref{sec:cryptomotivation}.

\subsection{Genus-2 curves with simple Jacobian and high rank}\label{ssec:simple}
\emph{A curve of small height}.
Booker et al.~\cite{Booker2016} found $66158$ $\Q$-isomorphic classes of curves 
of absolute discriminant less than $10^6$.
They are available in the \href{https://www.lmfdb.org/Genus2Curve/Q/}{LMFDB}.
The largest rank in that list is $4$, e.g. the curve
\[
y^2+(x^3+x+1)y=x^5-x^4-5x^3+9x+6
\]
(see \href{https://www.lmfdb.org/Genus2Curve/Q/440509/a/440509/1}{LMFDB:440509.a.440509.1})
has a Jacobian of rank $4$. Booker and Sutherland~\cite{Sutherland2026} have
announced work in progress on a larger genus-$2$ database.

\medskip
\paragraph{\emph{Curves of high rank}} We summarize a list of record curves in the literature. Here $\widehat{h}$ denotes the base-$2$ logarithm of the maximum canonical height among the basis elements.
\begin{center}
\begingroup
\scriptsize
\setlength{\tabcolsep}{3pt}
\renewcommand{\arraystretch}{1.12}
\begin{tabular}{@{}p{0.70\textwidth}ccc@{}}
\hline
curve & ref. & rank & $\widehat{h}$\\
\hline
\(\begin{aligned}[t]
y^2={}&1306881x^6 + 18610236x^5 - 46135758x^4\\
&{}-1536521592x^3 -2095359287x^2 + 32447351356x + 89852477764
\end{aligned}\)
& \cite{Stahlke1997} & $\geq16$ & 8.41\\
\hline
\(\begin{aligned}[t]
y^2={}&82342800x^6 - 470135160x^5 + 52485681x^4 + 2396040466x^3\\
&{}+ 567207969x^2 - 985905640x + 247747600
\end{aligned}\)
& \cite{Stoll2016} & $\geq22$ & 14.73\\
\hline
\(\begin{aligned}[t]
y^2={}&4037229x^6 + 34187102x^5 - 724533076x^4 - 4944866082x^3\\
&{}+57659086152x^2 + 241518308040x + 313383220164
\end{aligned}\)
& \cite{Dreier1997} & $\geq25$ & 28.55\\
\hline
\(\begin{aligned}[t]
y^2={}&80878009x^6 - 236558406x^5 - 1018244179x^4\\
&{}+4436648480x^3 + 6445563464x^2 - 13620761544x + 68406^2
\end{aligned}\)
& \cite{Elkies2008} & $\geq26$ & 32.14\\
\hline
\(\begin{aligned}[t]
y^2={}&60516x^6 + 1680225324x^5 + 200663873413x^4 - 1021197439562x^3\\
&{}-290505889943111x^2 + 316071770416320x + 123355813282790400
\end{aligned}\)
& \cite{Elkies2015} & $\geq27$ & 39.57\\
\hline
\end{tabular}
\endgroup
\end{center}

\subsection{Highest known ranks for genus-2 curves with split Jacobian}\label{ssec:highest}

By Proposition~\ref{prop:gluing}, gluing elliptic curves with full rational
$2$-torsion gives split genus-$2$ Jacobians, and for
$J=\Jac(C)\sim_K E_1\times E_2$ the Mordell--Weil rank is
$\rank E_1(K)+\rank E_2(K)$; in the diagonal case this is $2\rank E$. Thus the
congruent-number curve \(E:y^2=x^3-x\)
(\href{https://www.lmfdb.org/EllipticCurve/Q/32/a/3}{LMFDB:32.a3}) and
\(C:y^2=6x^6-9x^4-9x^2+6\), with \(\Jac(C)\sim_{\Q}E\times E\), give
genus-\(2\) twists of rank \(2s\) from rank-\(s\) twists of \(E\); see
Rogers~\cite{Rogers2000} and Watkins~\cite{Watkins2014}*{Table 2} for
\(s=2,\ldots,7\). Over \(\Q(t)\), Shioda~\cite{Shioda1997} found a rank-\(14\)
split example, and combining Elkies's
\(E_1(\Q)\simeq(\Z/2\Z)^2\times\Z^{15}\) curve~\cite{Elkies2009} with the
rank-\(5\) full-\(2\)-torsion curve from
\cite{DujellaPeral2020}*{Example 19} gives a rank-\(20\) genus-\(2\) Jacobian
\((2,2)\)-isogenous to \(E_1\times E_2\); the resulting sextic model has
coefficients in \(\Q(t)\) of degree \(756\) and binary size \(1761\).

\bibliographystyle{alpha}
\bibliography{references}

\end{document}